\documentclass[11pt,letterpaper]{article}
\usepackage[T1]{fontenc}
\usepackage[utf8]{inputenc}
\usepackage{lmodern}
\usepackage{amsmath,amssymb,amsthm,mathtools}
\usepackage[margin=1in]{geometry}
\usepackage{microtype,graphicx,enumitem}
\usepackage[section]{placeins}
\usepackage[numbers,sort&compress]{natbib}
\usepackage{xcolor}
\usepackage{tikz}
\definecolor{setblue}{RGB}{35,85,170}
\definecolor{setred}{RGB}{190,45,55}
\definecolor{forestgreen}{RGB}{35,125,75}
\usetikzlibrary{calc,positioning,decorations.pathreplacing,arrows.meta}
\usepackage{hyperref}
\hypersetup{hidelinks,
  pdftitle={Improved Bounds for the Maximum Reduced Denominator of Fractional Chromatic Numbers},
  pdfauthor={Xinbo Xu and Xuding Zhu}}
\newtheorem{theorem}{Theorem}[section]
\newtheorem{lemma}[theorem]{Lemma}
\newtheorem{proposition}[theorem]{Proposition}

\newtheorem{observation}[theorem]{Observation}
\theoremstyle{definition}
\newtheorem{definition}[theorem]{Definition}
\theoremstyle{remark}

\numberwithin{equation}{section}
\newcommand{\chif}{\chi_f}
\newcommand{\den}{\operatorname{den}}
\newcommand{\sd}{\operatorname{sd}}
\newcommand{\doi}[1]{\href{https://doi.org/#1}{doi:~\nolinkurl{#1}}}
\allowdisplaybreaks[1]
\title{Bounds on the maximum denominator of fractional chromatic numbers}
\author{Xinbo Xu \qquad Xuding Zhu}
\date{}
\begin{document}
\maketitle
\begin{abstract}
For a positive rational number $x=p/q$ with $p,q$ be coprime positive integers, let $\den(x) =q$.  For a positive integer $n$, let $\operatorname{sd}(n) = \max\{\den(\chi_f(G)): |V(G)|=n\}$. Let $D(n) = 2^{-n}(n+1)^{(n+1)/2}$. It was proved by Alon, Tuza and Voigt that    $\operatorname{sd}(n)\le D(n)$. In this paper, we prove that   
\[
\operatorname{sd}(n)\le D(n)n^{-\frac12\log_2 n+\frac14\log_2\log_2 n+O(1)}
\]
for all sufficiently large $n$.
For the lower bound on $\operatorname{sd}(n)$, it was recently proved by Bonamy, Hylasov\'a, Kaiser and Sereni that $\limsup_{n\to\infty}\sd(n)^{1/n}\ge\sqrt2.$
We prove that
\[
\limsup_{n\to\infty}\operatorname{sd}(n)^{1/n}\ge 3^{1/3} > \sqrt{2}.
\]

\end{abstract}
\noindent\textbf{Keywords.} Fractional chromatic number; denominator; tower graph; determinant.
\medskip

\section{Introduction}\label{sec:introduction}

All graphs considered in this paper are finite and simple. A fractional colouring of a graph $G$ assigns nonnegative weights to its independent sets so that, for each vertex, the weights of the sets containing it sum to at least $1$. The fractional chromatic number $\chif(G)$ is the minimum total weight of a fractional colouring of $G$. This minimum is rational, since it is the optimum of a finite linear programme with rational coefficients. For a positive rational number $x=p/q$ with $p,q$ be coprime positive integers, let $\den(x) =q$. For a positive integer $n$, let
\[
\sd(n)=\max\{\den(\chif(G)):|V(G)|=n\}.
\]

Erd\H{o}s asked whether $\sd(n)\le n$ for every positive integer $n$. Chv\'atal, Garey and Johnson~\cite{ChvatalGareyJohnson1978} answered this question negatively by constructing graphs that give a superpolynomial lower bound on $\sd(n)$. On the other hand, by using Cramer's rule and Hadamard's inequality, they proved  that $\sd(n)\le n^{n/2}$. Alon, Tuza and Voigt~\cite[Lemma 2.1]{AlonTuzaVoigt1997} improved the upper bound to
\begin{equation}\label{eq:ATV-intro}
\sd(n)\le D(n),\qquad D(n)=2^{-n}(n+1)^{(n+1)/2}.
\end{equation}
Larsen, Propp and Ullman~\cite{LarsenProppUllman1995}
proved that
\[
 \chif(M(G))=\chif(G)+\frac{1}{\chif(G)},
\]
where $M(G)$ is the Mycielskian of $G$.
Using this formula, Fisher~\cite{Fisher1995} showed that
repeatedly applying the Mycielski construction to $K_3$
produces graphs $G$ with $\den(\chi_f(G))$
  growing exponentially in the number
of vertices of $G$.   More recently, Bonamy, Hylasov\'a, Kaiser and Sereni~\cite{BonamyHylasovaKaiserSereni2026} constructed tower graphs and proved that
\begin{equation}\label{eq:old-lower-intro}
 \limsup_{n\to\infty}\sd(n)^{1/n}\ge\sqrt2.
\end{equation}

Throughout, $\log$ denotes the natural logarithm and $\log_2$ the base-two logarithm.
In this paper, we improve both the upper bound~\eqref{eq:ATV-intro} and the   lower bound~\eqref{eq:old-lower-intro}.

\begin{theorem}\label{thm:upper-main}
For all sufficiently large integers $n$,
\begin{equation}\label{eq:upper-main-final}
\sd(n)\le D(n)n^{-\frac12\log_2 n+
\frac14\log_2\log_2 n+O(1)}.
\end{equation}
\end{theorem}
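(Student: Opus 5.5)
The plan is to follow the Alon--Tuza--Voigt argument and sharpen the determinant estimate at its core. Write $\chif(G)$ as the optimum of the dual LP: maximize $\sum_v y_v$ subject to $\sum_{v\in I} y_v\le 1$ for every independent set $I$, with $y\ge 0$. Take an optimal basic solution. It is the unique solution of a square system $My=\mathbf 1$, where $M$ is a $k\times k$ $0/1$ matrix ($k\le n$) whose rows are incidence vectors of independent sets restricted to the support of $y$. By Cramer's rule, $\chif(G)=\sum_v y_v$ has denominator dividing $|\det M|$. It therefore suffices to bound $|\det M|$ for the $0/1$ matrices that can actually occur here.

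The ATV bound $D(n)$ is exactly the maximal determinant of an $n\times n$ $0/1$ matrix, obtained from Hadamard's inequality via the standard bordering to a $\pm1$ matrix of order $n+1$. Equality essentially forces a Hadamard matrix: every row of $M$ has about $n/2$ ones, and any two rows share about $n/4$ ones. To gain the factor $n^{-\frac12\log_2 n+\frac14\log_2\log_2 n+O(1)}$, I would exploit a structural constraint. The rows are independent sets of $G$, and every vertex $v$ in the support has positive dual weight, so by complementary slackness $v$ lies in some independent set used in an optimal colouring. The constraint I would try to use is this: the rows of $M$ can be chosen to be \emph{maximal} independent sets, or more usefully, the column pattern is restricted. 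If two columns $u,v$ are adjacent vertices, no row contains both, so the corresponding $\pm1$ columns of the bordered matrix have an inner product bounded away from $0$. If instead $u,v$ are non-adjacent, we may be able to enlarge the independent sets.

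The key step is to find many pairwise strongly correlated columns. Consider a maximum clique or independent set of $G$. By Ramsey's theorem, $G$ contains a clique or an independent set of size about $\frac12\log_2 n$. A clique $K$ of size $t$ gives $t$ columns with disjoint supports. A Gram-matrix (Fischer/Hadamard block) argument then shows that the $t$ columns together contribute a volume smaller than the Hadamard bound by a factor of roughly $n^{-ct}$. An independent set of size $t$ should be handled symmetrically: those columns can be merged or shown to be nearly equal, using that an optimal basis can be taken to contain such vertices in the same rows. Repeating greedily, peeling off cliques or independent sets of size about $\log_2 m$ from the remaining $m$ vertices, gives a total saving of $\sum_m$ of roughly $\log_2 m/m$ per column in exponent $\log n$. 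That sum totals about $\frac12(\log_2 n)\log_2 n$ in the exponent of $n$, which matches the target main term $n^{-\frac12\log_2 n}$. The $\frac14\log_2\log_2 n$ correction comes from the second-order term in the Ramsey bound, meaning the clique size $\log_2 m-\frac12\log_2\log_2 m$ type estimates.

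I expect the main obstacle to be making the per-clique saving quantitatively sharp. We need a lemma of the form: if $t$ columns of an $n\times n$ $0/1$ matrix have pairwise disjoint supports, then $|\det M|\le D(n)\cdot n^{-t/2+O(1)}$ or similar. Its proof would bound the Gram determinant of the bordered $\pm1$ columns, which have pairwise inner product about $-(n+1)/... $ after translation, then apply Hadamard to the rest. Chaining it iteratively without losing constants needs care. I would first try to prove the single-step lemma via the block Fischer inequality $\det(A^{\mathsf T}A)\le\det(A_1^{\mathsf T}A_1)\det(A_2^{\mathsf T}A_2)$.
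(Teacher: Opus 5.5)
Your reduction (an optimal basic solution plus Cramer's rule, so that $\den(\chif(G))$ divides $|\det M|$ for a $0/1$ matrix whose rows are independent sets) is the paper's framework, and so is your plan to exploit adjacency through a block Fischer inequality. The blocks you propose, however, do not give the saving. The clique lemma is false as stated. If $t$ columns have pairwise disjoint supports, this forces $|\det M|$ below $D(n)$ only by the factor $2^t(t+1)^{-(t+1)/2}$, which does not depend on $n$; it is not $n^{-ct}$. To see this, use the paper's covariance formulation $|\det A|^2=(n+1)^{n+1}\det\Sigma$. The clique coordinates take values in $\{0,e_1,\dots,e_t\}$ with probabilities $p_0,\dots,p_t$, so their covariance determinant is $p_0p_1\cdots p_t\le(t+1)^{-(t+1)}$, against $4^{-t}$ for unconstrained coordinates. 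For $t=2$ this is the constant $\gamma=4/(3\sqrt3)$, and a Ramsey clique of size $\frac12\log_2 n$ saves only $n^{-\Theta(\log\log n)}$. Your remark on inner products also fails: adjacent columns of the bordered $\pm1$ matrix need not have inner product bounded away from $0$, since with disjoint supports it equals $n+1-2|S_u|-2|S_v|$. The independent-set branch has no mechanism at all, because non-adjacency imposes no constraint on the columns, and such columns cannot be merged. For $C_5$ the optimal basis is forced to be the five sets $\{i,i+2\}$, nonadjacent vertices get different columns, and the determinant is $2=\den(5/2)$. So for triangle-free graphs your scheme yields at most the matching bound $\gamma^s$. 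That bound is insufficient when the maximum matching has fewer than about $(\log_2 n)^2$ edges. Finally, the $\frac14\log_2\log_2 n$ term has nothing to do with Ramsey numbers.

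The missing idea is to use \emph{stars}, which are always available in bulk once the maximum matching is small, whereas large cliques need not exist. Suppose a centre is adjacent to $m-1$ leaves. Conditioning on the centre gives $\det\operatorname{Cov}\le p(1-p)^m4^{-(m-1)}\le4^{-m}f(m)^2$ with $f(m)^2\le2/(m+1)$, so a star of order $m$ saves a factor of order $m^{-1/2}$. The paper then splits on the matching number $s$.
\begin{itemize}
\item If $s\ge2(\log_2 n)^2$, taking the matching edges as blocks gives $\gamma^s\le n^{-\frac12\log_2 n}$.
\item Otherwise almost all vertices are unmatched. Assume no two nonadjacent vertices have nested neighbourhoods; this can be arranged by passing to a fractionally vertex-critical induced subgraph. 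Then the neighbourhoods of the relevant unmatched vertices form an antichain of subsets of an $s$-element set of matched endpoints. Take $h=\lfloor\log_2 n+\frac12\log_2\log_2 n-2\rfloor$. Sperner's theorem and Hall's theorem, with each centre receiving about $n/h$ of these vertices, give a spanning star forest whose components have order at most $n/h+4$.
\end{itemize}
Applying Hadamard--Fischer over these stars gives $\den(\chif(G))\le D(n)\bigl(4(h-1)/n\bigr)^{(h-1)/2}=D(n)\,n^{-\frac12\log_2 n+\frac14\log_2\log_2 n+O(1)}$. The coefficient $\frac14$ comes from the factor $\sqrt{h+1}$ in $\binom{h}{\lfloor h/2\rfloor}\le2^h/\sqrt{h+1}$, which lets $h$ exceed $\log_2 n$ by $\frac12\log_2\log_2 n$.
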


The proof uses the independent-set structure of the columns in the fractional-colouring programme. Let $C$ be the incidence matrix of the independent sets of $G$, with rows indexed by vertices and columns indexed by independent sets.
Two rows of $C$ indexed by  adjacent vertices cannot both have entry $1$ in the same column. We show that if an $n$-vertex graph $G$ has a matching of size $s$, then
\[
 \den(\chif(G))\le\gamma^sD(n).
\]
For fractionally vertex-critical graphs with a small maximum matching, Sperner's theorem bounds the number of vertices in terms of the matching number. Combining this bound with
the determinant estimate gives Theorem~\ref{thm:upper-main}. Sperner's theorem bounds the order of this subgraph in terms of its matching number. Combining these estimates gives Theorem~\ref{thm:upper-main}.

In~\cite{BonamyHylasovaKaiserSereni2026}, Bonamy et al. defined the tower graph.  The \emph{tensor product} $G \times H$ of $G$ and $H$ has vertex set $\{(x,y): x \in V(G), y \in V(H)\}$, in which $(x,y) \sim (x',y')$ if $xx' \in E(G)$ and $yy' \in E(H)$. 

\begin{definition}
    \label{def-tower}
    Let $H$ be a graph, and let $k\ge\ell\ge1$ be integers. The tower graph $T_{k,\ell}(H)$ is constructed as follows:

\begin{enumerate}
    \item Let $P_{k+\ell}^{\,\ell}$ denote the $\ell$th power of the path with vertices $p_{1-\ell},p_{2-\ell},\ldots,p_k$.
    \item 
    Take the tensor product $P_{k+\ell}^\ell \times H$, denotes its by $p_{i,h}$, with $h\in V(H)$. Hence  
\[
 p_{i,h}p_{j,h'}\in E
 \quad\Longleftrightarrow\quad
 1\le|i-j|\le\ell\ \text{ and }\ hh'\in E(H).
\]
\item For each $i\in\{1-\ell,\ldots,0\}$, identify all vertices of the fibre  $\{p_{i,h}:h\in V(H)\}$ into one vertex $z_{i+\ell}$, suppressing any parallel edges created by the identification.
\item Add the edges of a copy of $H$ inside the terminal fibre $\{p_{k,h}:h\in V(H)\}$.
\end{enumerate}     
  
\end{definition}

In ~\cite{BonamyHylasovaKaiserSereni2026}, the fractional chromatic numbers of $\chif(T_{k,\ell}(K_2))$ for arbitrary $k$ and $\ell$ are determined. It follows from this result that  
\[
 \limsup_{n\to\infty}\sd(n)^{1/n}\ge \sqrt{2}.
\]
In this paper, we  determine $\chif(T_{k,\ell}(K_m))$ for $m\ge2$.

\begin{theorem} \label{thm:chif}
For fixed integers $m\ge2$ and $\ell\ge1$, for $k \ge 2$, let 
 $Q_k$ be integers defined recursively as follows: 
\[
 Q_k=mQ_{k-
 1}-(m-1)^\ell Q_{k-\ell-1}
 \qquad(k\ge2),
\]
with $Q_t=0$ for $t\le0$ and $Q_1=1$.
We prove that
\[
 \chif(T_{k,\ell}(K_m))=\ell+1+\frac{(m-1)^k}{Q_k}
 \qquad(k\ge\ell),
\]
and  $\gcd( Q_k, m-1)=1 $, and hence $\den(\chi_f( T_{k,\ell}(K_m))=Q_k$. 
\end{theorem}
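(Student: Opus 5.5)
The plan is to compute $\chif$ by LP duality, exhibiting a fractional colouring and a fractional clique of the same weight $\ell+1+(m-1)^k/Q_k$. First we fix the structure of $G=T_{k,\ell}(K_m)$. The vertices are $z_1,\dots,z_\ell$ together with the fibres $F_i=\{p_{i,h}:h\in[m]\}$ for $1\le i\le k$. Two vertices in fibres at distance at most $\ell$ are adjacent exactly when their labels differ. Each $z_j$ sees every vertex of $F_i$ with $1\le i\le j$ (more precisely, $z_{i+\ell}$ sees $F_t$ for $t\le i+\ell$), and the $z$'s form a clique. The terminal fibre $F_k$ is a $K_m$. An independent set therefore chooses, in any window of $\ell+1$ consecutive layers, at most one ``colour label'' $h$. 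Any set containing two vertices of the same window must share a label, and it never uses two vertices of $F_k$. So an independent set is essentially a sequence of label choices, with gaps and label changes only allowed after $\ell$ empty-or-compatible layers.

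\textbf{Lower bound via a weighted clique.} We look for a dual weighting $y$ that is constant within each fibre, say $y(p_{i,h})=a_i$, with weights $a_i$ on the $z$'s. Take $\ell+1$ consecutive layers of the path power; together with the $K_m$ at the end they carry near-cliques. The dual constraint says that every independent set has weight at most $1$. Because of the window structure, the heaviest independent sets pick one label and occupy a run of layers, or switch labels with a gap of $\ell$. The optimality conditions on such sets then give linear equations of the form
\[
a_i=\frac{1}{m}\Bigl(\text{something}\Bigr)\quad\text{and}\quad a_i=\sum(\cdots)a_{i-\ell-1},
\]
and the total weight telescopes into a recurrence of exactly the shape $Q_k=mQ_{k-1}-(m-1)^\ell Q_{k-\ell-1}$. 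Concretely, I would guess $a_i\propto (m-1)^{i-1}$-type ratios divided by $Q_k$, and verify the dual constraints by induction on the layer index.

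\textbf{Upper bound via a fractional colouring.} Dually, we give weights to the independent sets of the form ``label $h$ on a set of layers'' and to sets containing a $z_j$. We use the symmetry of $K_m$: average over all label permutations, so that only the pattern of layers matters. The $\ell+1$ colours correspond to residue classes of layers modulo $\ell+1$, with the $z$'s absorbing the boundary. The excess $(m-1)^k/Q_k$ comes from resolving the conflict at the terminal $K_m$. I would set up weights $w_i$ recursively along the tower and show, by the same recurrence, that complementary slackness holds with the clique above. The main obstacle is exactly this step: correctly guessing the family of independent sets in the support and checking that the recursion closes, in particular the boundary behaviour for $k$ near $\ell$. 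The first attempt is to solve small cases ($\ell=1$, and $m=2$, where the result must match the tower-graph result of Bonamy et al.) and generalise the pattern.

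\textbf{Arithmetic.} Reducing the recurrence mod $m-1$ gives $Q_k\equiv Q_{k-1}$ for $\ell\ge1$, so $Q_k\equiv Q_1=1\pmod{m-1}$ and $\gcd(Q_k,m-1)=1$. Hence $\gcd(Q_k,(m-1)^k)=1$, and the fraction $(\ell+1)+(m-1)^k/Q_k=\bigl((\ell+1)Q_k+(m-1)^k\bigr)/Q_k$ is already in lowest terms. Its numerator is coprime to $Q_k$ for the same reason, so $\den(\chif)=Q_k$, provided $Q_k>0$. Positivity I would show by checking inductively that $Q_k\ge (m-1)Q_{k-1}$, which gives $mQ_{k-1}-(m-1)^\ell Q_{k-\ell-1}\ge Q_{k-1}>0$.
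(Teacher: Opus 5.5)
\textbf{There is a genuine gap: the proposal omits both LP certificates, and it rests on an incorrect description of the independent sets.}

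Your arithmetic is fine. Modulo $m-1$ the recurrence reads $Q_k\equiv Q_{k-1}$, so $Q_k\equiv 1$. Your induction $Q_k\ge(m-1)Q_{k-1}$, which yields $Q_{k-1}\ge(m-1)^\ell Q_{k-\ell-1}$, is a valid self-contained proof that $Q_k>0$.

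The actual content of the theorem is missing, however. You never specify the fractional-clique weights, either on the layers or on $z_1,\dots,z_\ell$. You never specify the support or the weights of the fractional colouring. You never explain how a total weight would produce the recurrence-defined $Q_k$. ``Guess, then verify by induction'' is a plan, as you acknowledge.

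The structural picture you would build on is also wrong. The layers $P_1,\dots,P_{k-1}$ (your fibres) are independent, so an independent set can contain several, even all $m$, vertices of one nonterminal layer. Only vertices in \emph{different} layers at distance at most $\ell$ must share a label. A layer used with two labels blocks the $\ell$ neighbouring layers on each side.

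These multi-label sets are essential. For $k=2$, $\ell=1$, $m=2$ the tower is the $5$-cycle $z_1p_{1,1}p_{2,2}p_{2,1}p_{1,2}$. Its unique optimal fractional colouring gives weight $1/2$ to $P_1=\{p_{1,1},p_{1,2}\}$. So no colouring supported on single-label sets and sets through $Z$ can be optimal, and a clique check over ``one label per window'' sets misses constraints. Your geometric guess for the clique weights is also correct only for $\ell=1$.

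The missing mechanism, as in the paper, is as follows.
\begin{itemize}
\item \emph{Weight sequence.} Take $f_i=0$ for $i\le\ell-2$, $f_{\ell-1}=1/(m-1)$, and $(m-1)f_i=\sum_{j=i-\ell}^{i-1}f_j$. Set $S_*=\sum_{i=\ell-1}^{k+\ell-2}f_i$.
\item \emph{Clique.} Put $f_{k-i+\ell-1}$ on each vertex of $P_i$, and the sum of the last $j$ terms of $S_*$ on $z_j$.
\item \emph{Tail lemma.} Every independent $J\subseteq P_j\cup\dots\cup P_k$ has weight at most $\sum_{i=\ell-1}^{k-j+\ell-1}f_i$. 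The proof is by induction. The multi-label case works because $mf_{q+\ell-1}=\sum_{i=q-1}^{q+\ell-1}f_i$: a full layer pays exactly for the $\ell$ layers it blocks.
\item \emph{Colouring, first family.} For each $j\in[k]$ and label $a$, take the set consisting of the cap vertex $z_{r-1}$, where $r\equiv j\pmod{\ell+1}$ and $2\le r\le\ell+1$ (no cap vertex if $r=1$). Add the full layers $P_i$ with $1\le i<j$ and $i\equiv j\pmod{\ell+1}$, and the tail $p_{j,a},\dots,p_{k,a}$. Give it weight $f_{j+\ell-2}$.
\item \emph{Colouring, second family.} For $k-\ell\le j<k$, take the analogous sets made of the cap vertex and full layers only, up to $P_j$. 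Give each weight $\sum_{t=j+1}^{k}f_{t+\ell-2}$.
\item \emph{Coverage.} The same identity shows every vertex is covered exactly $S_*$ times.
\end{itemize}
Both certificates then have value $\ell+1+1/S_*$. Finally one checks that $(m-1)^kS_*$, viewed as a function of $k$, satisfies your recurrence with initial value $1$. This identification with $Q_k$ is the last link your proposal lacks.
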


The proof extends the fractional colouring and fractional clique constructed in~\cite{BonamyHylasovaKaiserSereni2026}. For each real number $c$ with $m-1<c<m$, the recurrence gives $Q_k\ge c^{k-1}$ for all $k\ge1$, provided $\ell$ is sufficiently large. Since $T_{k,\ell}(K_m)$ has $mk+\ell$ vertices, this yields  the following result:

\begin{theorem}\label{thm:asymptotic}
For every integer $m\ge2$,
\[
 \limsup_{n\to\infty}\sd(n)^{1/n}\ge m^{1/m}.
\]
Consequently,
\[
 \limsup_{n\to\infty}\sd(n)^{1/n}
 \ge 3^{1/3}>\sqrt2.
\]
\end{theorem}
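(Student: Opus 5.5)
We derive Theorem~\ref{thm:asymptotic} from Theorem~\ref{thm:chif} by a growth estimate on the recurrence. Fix $m\ge2$ and a real $c$ with $m-1<c<m$. The graph $T_{k,\ell}(K_m)$ has $n=mk+\ell$ vertices and, by Theorem~\ref{thm:chif}, $\den(\chif(T_{k,\ell}(K_m)))=Q_k$. Hence $\sd(mk+\ell)\ge Q_k$. If we show $Q_k\ge c^{k-1}$ for all $k\ge1$ once $\ell$ is large enough (depending on $m,c$), then, letting $k\to\infty$ with $\ell$ fixed,
\[
\limsup_{n\to\infty}\sd(n)^{1/n}\ge\lim_{k\to\infty}c^{(k-1)/(mk+\ell)}=c^{1/m}.
\]
Since $c<m$ is arbitrary, this gives $\limsup\sd(n)^{1/n}\ge m^{1/m}$. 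Taking $m=3$ gives $3^{1/3}$, and $3^{1/3}>2^{1/2}$ because $3^2=9>8=2^3$.

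The main step is the bound $Q_k\ge c^{k-1}$. I would prove the stronger statement $Q_k\ge cQ_{k-1}$ for all $k\ge2$ (with $Q_1=1$) by induction. For $2\le k\le\ell+1$ the subtracted term involves $Q_{k-\ell-1}$ with index $\le0$, so $Q_k=mQ_{k-1}\ge cQ_{k-1}$. For $k\ge\ell+2$, assume $Q_j\ge cQ_{j-1}$ for all $2\le j<k$, so that all $Q_j>0$ up to $k-1$. Then $Q_{k-1}\ge c^{\ell}Q_{k-\ell-1}$, which gives
\[
Q_k=mQ_{k-1}-(m-1)^\ell Q_{k-\ell-1}\ge mQ_{k-1}-\Bigl(\tfrac{m-1}{c}\Bigr)^{\ell}Q_{k-1}.
\]
Choose $\ell$ so large that $\bigl(\tfrac{m-1}{c}\bigr)^\ell\le m-c$, which is possible since $(m-1)/c<1$. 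Then $Q_k\ge cQ_{k-1}$, completing the induction, and hence $Q_k\ge c^{k-1}$.

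No step is a serious obstacle once Theorem~\ref{thm:chif} is available; the only care needed is that the induction hypothesis is stated as the ratio bound $Q_j\ge cQ_{j-1}$ rather than $Q_j\ge c^{j-1}$. The ratio form gives $Q_{k-\ell-1}\le c^{-\ell}Q_{k-1}$ directly, whereas a lower bound alone would not control the subtracted term. One should also check that Theorem~\ref{thm:chif} applies, i.e.\ $k\ge\ell$, which holds as $k\to\infty$ with $\ell$ fixed.
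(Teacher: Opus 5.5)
Your proof is correct. It follows the route sketched in the paper's introduction, but the formal proof in Section~\ref{sec:asymptotic} argues differently.

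\textbf{The paper's route.} Lemma~\ref{lem:elementary-growth} telescopes the recurrence against powers of $m$ and obtains the two-sided estimate $m^{k-1}\bigl(1-\frac{k}{m}\bigl(\frac{m-1}{m}\bigr)^{\ell}\bigr)\le Q_k\le m^{k-1}$. This uses only $Q_i\ge0$, which comes from the identity $Q_k=(m-1)^kS_{m,\ell}(k,\ell)$. The paper then takes the diagonal family $k=\ell^2$, along which the error term tends to zero. This gives $Q_{\ell^2}^{1/(m\ell^2+\ell)}\to m^{1/m}$ along a single sequence, with no auxiliary parameter.

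\textbf{Your route.} You fix $\ell$ (depending on $c$) and prove the ratio bound $Q_k\ge cQ_{k-1}$ by induction. You then let $k\to\infty$, and only afterwards let $c\to m$.

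\textbf{What each buys.}
\begin{itemize}
\item Your ratio induction proves positivity of the $Q_k$ internally, so you never need the combinatorial meaning of $Q_k$.
\item Your version shows that one fixed $\ell$ already gives exponential base at least $c^{1/m}$ for every $c<m$.
\item You correctly identified the delicate point: stating the hypothesis as a ratio bound is what controls the subtracted term $(m-1)^\ell Q_{k-\ell-1}$.
\item The paper's version is quantitative: it gives $Q_k=(1+o(1))m^{k-1}$ in one step.
\item The paper's upper bound $Q_k\le m^{k-1}$ also shows that towers with seed $K_m$ cannot exceed base $m^{1/m}$.
\end{itemize}
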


\section{Proof of the upper bound}\label{sec:upper}

Throughout this section, let $G$ have $n\ge2$ vertices,
and write $q(G)=\den(\chif(G))$.
We first show how vertex-disjoint stars improve the determinant bound for
$q(G)$. A large matching already gives the required improvement. When the
maximum matching is small, we construct a spanning star forest with
component orders $O(n/\log n)$.

\subsection{The determinant estimate from stars}
\label{subsec:upper-determinants}

Fix an ordering of $V(G)$ and list its nonempty independent sets as
$I_1,\ldots,I_M$. Let $C\in\{0,1\}^{n\times M}$ be their incidence matrix:
its row indexed by $v$ has entry $C_{v,j}=1$ precisely when $v\in I_j$.
We write $\mathbf 0_d$ and $\mathbf 1_d$ for the zero and all-one column
vectors of length $d$, and $\mathrm{Id}_d$ for the identity matrix of
order $d$.

\begin{lemma}\label{lem:upper-basis}
There is a nonsingular matrix $A\in\{0,1\}^{n\times n}$ whose columns
are indicators of independent sets of $G$ and for which
$q(G)\mid |\det A|$.
\end{lemma}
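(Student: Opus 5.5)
We work with the standard linear programme for the fractional chromatic number,
\[
\chif(G)=\min\{\mathbf 1_M^{\top}x : Cx\ge \mathbf 1_n,\ x\ge 0\},
\]
and its dual, the fractional clique programme
\[
\max\{\mathbf 1_n^{\top}y : C^{\top}y\le \mathbf 1_M,\ y\ge 0\}.
\]
The plan is to take an optimal \emph{basic} solution of the dual, whose support is determined by a square subsystem of $C^{\top}$. We then enlarge that subsystem to a full $n\times n$ nonsingular matrix whose columns are columns of $C$. Cramer's rule then shows that $\chif(G)$ is a rational number whose denominator divides $|\det A|$.

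First, the dual feasible region is a nonempty polytope: $y=0$ is feasible, and the constraints $y_v\le 1$ coming from the singletons $\{v\}$ bound it. Since $y\ge 0$ is imposed, the region has a vertex, and the optimum is attained at a vertex $y^*$. At a vertex, $n$ linearly independent constraints are tight. These constraints are of two kinds: rows of $C^{\top}$ (independent sets $I_j$ with $\sum_{v\in I_j}y^*_v=1$) and coordinate constraints $y_v=0$.

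Choose such a set consisting of tight independent sets $I_{j_1},\dots,I_{j_r}$ and tight coordinates $v\in Z$, with $r+|Z|=n$. Replace each coordinate constraint $y_v=0$ by the singleton independent set $\{v\}$. This needs care, because $\{v\}$ is not tight. Instead, we solve the system restricted to the support.

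Concretely, let $S=V\setminus Z$. Then $y^*|_S$ is the unique solution of $B^{\top}y_S=\mathbf 1_r$, where $B$ is the $S\times\{j_1,\dots,j_r\}$ submatrix of $C$, and $r=|S|$ with $B$ nonsingular. Define
\[
A=\begin{pmatrix} B & 0\\ * & \mathrm{Id}_{|Z|}\end{pmatrix},
\]
with rows ordered $S$ then $Z$. Its columns are the indicators of $I_{j_1},\dots,I_{j_r}$ on all of $V$, followed by the indicators of the singletons $\{v\}$ for $v\in Z$. It is block lower triangular, so $\det A=\det B\neq 0$, and every column is an independent set indicator with 0/1 entries.

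Cramer's rule gives $y^*_v=\det B_v/\det B$ with integer numerators. Hence
\[
\chif(G)=\mathbf 1^{\top}y^*=\frac{N}{\det B}
\]
for some integer $N$. Writing $\chif(G)=p/q$ in lowest terms, we get $q(G)\mid|\det B|=|\det A|$.

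The main obstacle is the justification that the chosen tight constraints give exactly a nonsingular square block $B$ on the support $S$. This needs the linear independence of the $n$ tight constraints. After eliminating the coordinate constraints, the remaining rows $I_{j_i}$ restricted to $S$ must be independent and number $|S|$, which follows from the $n$ constraints having full rank. One could alternatively use the primal: an optimal basic primal solution already yields $n$ columns, after adding slack columns, which are replaced by singletons. The dual route above avoids slack variables, so I would carry it out.
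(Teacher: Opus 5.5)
Your proof is correct and is essentially the paper's argument read from the dual side. The $n$ linearly independent tight constraints at your optimal vertex of the fractional-clique polytope correspond to $n$ linearly independent columns of $[\,C\;-\mathrm{Id}_n\,]$, and your $A$ has exactly the paper's form (independent-set columns, with each slack column $-e_v$ replaced by the singleton indicator $e_v$); the only difference is that you use an optimal dual vertex plus strong duality, whereas the paper uses an optimal primal basis and reads off $\chif(G)=c_H^{\mathsf T}H^{-1}\mathbf 1_n$ directly. The step you flagged as the main obstacle is immediate: with columns ordered $S$ then $Z$, the tight-constraint matrix is $\bigl(\begin{smallmatrix}B^{\mathsf T}&*\\0&\mathrm{Id}\end{smallmatrix}\bigr)$ with $r=n-|Z|=|S|$, so its nonsingularity gives $\det B\neq0$.
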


\begin{proof}
Introducing the excess vector $\xi=Cx-\mathbf 1_n$ gives
\begin{equation}\label{eq:upper-lp}
\chif(G)=
\min\left\{
\mathbf 1_M^{\mathsf T}x:
E\begin{pmatrix}x\\\xi\end{pmatrix}=\mathbf 1_n,
\quad x,\xi\ge0
\right\},
\qquad E=[\,C\; -\mathrm{Id}_n\,].
\end{equation}
The matrix $E$ has full row rank, and the programme is feasible and bounded
below. The optimal-basis theorem therefore gives an optimal basis $H$, a
nonsingular $n\times n$ submatrix of $E$ with
$H^{-1}\mathbf 1_n\ge0$; see~\cite[Sections~7.4, 8.5 and 8.9]{Schrijver1986}.
Let $c_H$ be the corresponding objective vector: its entries are $1$ for
columns from $C$ and $0$ for columns from $-\mathrm{Id}_n$. Then
$$
\chif(G)=c_H^{\mathsf T}H^{-1}\mathbf 1_n,
$$
so Cramer's rule gives $q(G)\mid|\det H|$. Replace each selected column of
$-\mathrm{Id}_n$ by its negative. The resulting binary matrix $A$ has
$|\det A|=|\det H|$, and every column of $A$ is the indicator of an
independent set of $G$.
\end{proof}

Let $a_1,\ldots,a_n$ be the columns of $A$, put $a_0=\mathbf 0_n$ and
$N=n+1$, and choose $X$ uniformly from $a_0,\ldots,a_n$. Write
$\mu=\mathbb E X$ and
\begin{equation}\label{eq:upper-covariance}
\Sigma=\operatorname{Cov}(X)
:=\mathbb E[(X-\mu)(X-\mu)^{\mathsf T}]
=\mathbb E[XX^{\mathsf T}]-\mu\mu^{\mathsf T}.
\end{equation}
The covariance matrix $\Sigma$ is positive semidefinite, since
$z^{\mathsf T}\Sigma z=\mathbb E[(z^{\mathsf T}(X-\mu))^2]\ge0$ for every $z\in\mathbb R^n$. Each of the
$n+1$ columns is sampled with probability $1/(n+1)$. Writing the
bordered matrix and its Gram matrix gives
\begin{equation}\label{eq:upper-bordered}
\begin{aligned}
Y&=\begin{pmatrix}1&\mathbf 1_n^{\mathsf T}\\\mathbf 0_n&A\end{pmatrix},
\\
YY^{\mathsf T}
&=\sum_{c=0}^n
\begin{pmatrix}1\\a_c\end{pmatrix}
\begin{pmatrix}1&a_c^{\mathsf T}\end{pmatrix}
=N\begin{pmatrix}1&\mu^{\mathsf T}\\\mu&\mathbb E[XX^{\mathsf T}]\end{pmatrix}.
\end{aligned}
\end{equation}
Taking the Schur complement of the top-left entry in the last matrix,
and using $\det Y=\det A$, gives
\begin{equation}\label{eq:upper-det-covariance}
|\det A|^2=\det(YY^{\mathsf T})=N^{n+1}\det\Sigma.
\end{equation}

For $B\subseteq V(G)$, let $X_B$ be the subvector indexed by $B$.
Given a partition $V(G)=B_1\cup\cdots\cup B_t$, the Hadamard--Fischer
inequality applied to the principal blocks
$\operatorname{Cov}(X_{B_i})$ gives
\begin{equation}\label{eq:upper-hadamard-fischer}
\det\Sigma\le\prod_{i=1}^t\det\operatorname{Cov}(X_{B_i});
\end{equation}
see~\cite[Chapter~7]{HornJohnson2013}.
Taking singleton blocks and using $\operatorname{Var}(X_v)\le1/4$
recovers $|\det A|\le D(n)$. The following lemma uses the edges of a
star to improve the estimate on a larger block.

For an integer $m\ge2$, define
\begin{equation}\label{eq:upper-star-factor}
f(m)=2\sqrt{\frac{m^m}{(m+1)^{m+1}}},
\qquad \gamma=f(2)=\frac4{3\sqrt3}.
\end{equation}

\begin{lemma}\label{lem:upper-star-covariance}
Let $X_*=(J,T^{\mathsf T})^{\mathsf T}\in\{0,1\}^{m\times1}$ be the
indicator of a random independent set of $K_{1,m-1}$, where $J$ is the
centre coordinate and $T\in\{0,1\}^{(m-1)\times1}$ is the leaf vector.
Then
\begin{equation}\label{eq:upper-star-cov-bound}
\det\operatorname{Cov}(X_*)\le4^{-m}f(m)^2.
\end{equation}
The bound is attained by a suitable distribution.
\end{lemma}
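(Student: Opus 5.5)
The plan is to condition on the centre coordinate $J$. Every independent set of $K_{1,m-1}$ either contains the centre, and then no leaf, or omits the centre and contains an arbitrary set of leaves. So if $J=1$ then $T=\mathbf 0_{m-1}$, and if $J=0$ then $T$ is an arbitrary random vector in $\{0,1\}^{m-1}$. Write $p=\Pr(J=1)$. If $p\in\{0,1\}$ then $\operatorname{Var}(J)=0$ and $\det\operatorname{Cov}(X_*)=0$, so the bound is trivial. We may therefore assume $0<p<1$.

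\emph{Step 1 (Schur complement).} Take the Schur complement of the $(1,1)$ entry $\operatorname{Var}(J)=p(1-p)$ in $\operatorname{Cov}(X_*)$. This gives
\[
\det\operatorname{Cov}(X_*)=p(1-p)\,\det R,\qquad
R=\operatorname{Cov}(T)-\frac{\operatorname{Cov}(T,J)\operatorname{Cov}(J,T)}{\operatorname{Var}(J)} .
\]
Here $R$ is the covariance of the residual of the best linear predictor of $T$ from $J$.

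\emph{Step 2 (identify $R$).} Since $J$ takes only two values, $\mathbb E[T\mid J]$ is an affine function of $J$. Hence the linear predictor equals the conditional expectation, and $R=\mathbb E[\operatorname{Cov}(T\mid J)]$. Alternatively, one can verify this by expanding $\operatorname{Cov}(T)$ with the law of total covariance. Given $J=1$, $T$ is constant, so its conditional covariance vanishes. Therefore
\[
R=(1-p)\operatorname{Cov}(T\mid J=0),\qquad
\det R=(1-p)^{m-1}\det\operatorname{Cov}(T\mid J=0).
\]

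\emph{Step 3 (Hadamard).} Apply Hadamard's inequality, which is the singleton-block case of \eqref{eq:upper-hadamard-fischer}, to $\operatorname{Cov}(T\mid J=0)$. Each conditional variance of a $0/1$ coordinate is at most $1/4$, so $\det\operatorname{Cov}(T\mid J=0)\le 4^{-(m-1)}$. Combining the three steps,
\[
\det\operatorname{Cov}(X_*)\le 4^{-(m-1)}\,p(1-p)^m .
\]

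\emph{Step 4 (optimise in $p$).} The function $p(1-p)^m$ on $[0,1]$ is maximised at $p=1/(m+1)$, with value $m^m/(m+1)^{m+1}$. This gives
\[
\det\operatorname{Cov}(X_*)\le 4^{-(m-1)}\frac{m^m}{(m+1)^{m+1}}=4^{-m}\cdot 4\frac{m^m}{(m+1)^{m+1}}=4^{-m}f(m)^2 ,
\]
which is \eqref{eq:upper-star-cov-bound}.

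\emph{Equality.} Take $J=1$ with probability $1/(m+1)$. Otherwise take $T$ uniform on $\{0,1\}^{m-1}$, so that its coordinates are independent fair bits. Then $\operatorname{Cov}(T\mid J=0)=\tfrac14\mathrm{Id}_{m-1}$, and every inequality above is tight.

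The only step requiring care is Step 2, the identification of the Schur complement with the averaged conditional covariance. This is where the edges of the star enter, by forcing $T=\mathbf 0_{m-1}$ when $J=1$. I would verify it directly: write $\mu_0=\mathbb E[T\mid J=0]$, express $\operatorname{Cov}(T)$, $\operatorname{Cov}(T,J)=-p(1-p)\mu_0$ and $\operatorname{Var}(J)$ explicitly, and check that the rank-one correction cancels the between-group term $p(1-p)\mu_0\mu_0^{\mathsf T}$.
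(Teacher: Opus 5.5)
Your proposal is correct and follows essentially the paper's proof. You condition on the centre and reduce to $\det\operatorname{Cov}(X_*)=p(1-p)^m\det\operatorname{Cov}(T\mid J=0)$; the paper does this by explicit row elimination on the block form of $\operatorname{Cov}(X_*)$, which is exactly your Schur-complement computation. You then apply Hadamard's inequality, maximise over $p$, and use the same extremal distribution with $p=1/(m+1)$ and independent fair leaf bits.
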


\begin{proof}
Put $\Sigma_*=\operatorname{Cov}(X_*)$ and $p=\Pr(J=1)$.
If $p\in\{0,1\}$, then the centre row and column of $\Sigma_*$ are zero,
so the bound holds. Suppose $0<p<1$, and define
$$
\alpha=\mathbb E[T\mid J=0],
\qquad Q=\operatorname{Cov}(T\mid J=0).
$$
Since $T=\mathbf 0_{m-1}$ when $J=1$, conditioning on $J$ gives
$$
\mathbb E T=(1-p)\alpha,
\qquad \mathbb E[JT^{\mathsf T}]=\mathbf 0_{m-1}^{\mathsf T},
\qquad \mathbb E[TT^{\mathsf T}]=(1-p)(Q+\alpha\alpha^{\mathsf T}).
$$
Thus
\begin{equation}\label{eq:upper-star-cov-matrix}
\Sigma_*=
\begin{pmatrix}
p(1-p)&-p(1-p)\alpha^{\mathsf T}\\
-p(1-p)\alpha&(1-p)Q+p(1-p)\alpha\alpha^{\mathsf T}
\end{pmatrix}.
\end{equation}
Adding $\alpha_i$ times the first row to row $i+1$ eliminates the
lower-left block and replaces the lower-right block by $(1-p)Q$.
Consequently,
\begin{equation}\label{eq:upper-star-det}
\det\Sigma_*=p(1-p)^m\det Q.
\end{equation}
Since $Q$ is a covariance matrix with diagonal entries at most $1/4$,
Hadamard's inequality gives $\det Q\le4^{-(m-1)}$.
Combining this with~\eqref{eq:upper-star-det} and
$$
\max_{0\le p\le1}p(1-p)^m=\frac{m^m}{(m+1)^{m+1}}
$$
proves~\eqref{eq:upper-star-cov-bound}.
Equality holds when $p=1/(m+1)$ and, conditional on $J=0$, the leaf
coordinates are independent with mean $1/2$.
\end{proof}

For $m=2$, the covariance determinant is at most $1/27$, compared with
$1/16$ when the two binary coordinates are unrestricted. Thus each
disjoint edge contributes the factor $\gamma$ to the determinant bound.

\begin{proposition}\label{prop:upper-star-forest-bound}
If $G$ has a spanning star forest with component orders
$m_1,\ldots,m_t\ge2$, then
\begin{equation}\label{eq:upper-star-product}
q(G)\le D(n)\prod_{i=1}^t f(m_i).
\end{equation}
If $G$ has a matching of size $s$, then $q(G)\le D(n)\gamma^s$.
\end{proposition}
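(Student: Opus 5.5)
We combine Lemma~\ref{lem:upper-basis}, the identity~\eqref{eq:upper-det-covariance}, the Hadamard--Fischer inequality~\eqref{eq:upper-hadamard-fischer}, and Lemma~\ref{lem:upper-star-covariance}. By Lemma~\ref{lem:upper-basis}, $q(G)\le|\det A|$ for a nonsingular $0/1$ matrix $A$ whose columns are indicators of independent sets of $G$. With $X$ uniform on $a_0=\mathbf 0_n,a_1,\ldots,a_n$, identity~\eqref{eq:upper-det-covariance} gives
\[
|\det A|^2=N^{n+1}\det\Sigma,\qquad N=n+1.
\]
So the whole task is to bound $\det\Sigma$.

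Let $S_1,\ldots,S_t$ be the vertex sets of the stars in the spanning star forest, with $|S_i|=m_i$. Applying~\eqref{eq:upper-hadamard-fischer} to this partition gives
\[
\det\Sigma\le\prod_{i=1}^t\det\operatorname{Cov}(X_{S_i}).
\]
Each vector $X_{S_i}$ is the restriction to $S_i$ of the indicator of an independent set of $G$ (or of the empty set, coming from $a_0$). Any such set meets $S_i$ in an independent set of $G[S_i]$. The star $K_{1,m_i-1}$ is a spanning subgraph of $G[S_i]$, so every independent set of $G[S_i]$ is also independent in the star. Hence $X_{S_i}$ is a random indicator of an independent set of $K_{1,m_i-1}$, and Lemma~\ref{lem:upper-star-covariance} applies:
\[
\det\operatorname{Cov}(X_{S_i})\le 4^{-m_i}f(m_i)^2.
\]
The identification of centre and leaves in that lemma is only a relabelling of coordinates, which does not change the determinant.

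Since $\sum_i m_i=n$, multiplying these bounds gives $\det\Sigma\le 4^{-n}\prod_i f(m_i)^2$. Therefore
\[
q(G)^2\le|\det A|^2\le (n+1)^{n+1}4^{-n}\prod_i f(m_i)^2=D(n)^2\prod_i f(m_i)^2,
\]
and taking square roots gives~\eqref{eq:upper-star-product}.

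For the matching statement, take a matching $e_1,\ldots,e_s$. The blocks are the $s$ edges, each a star $K_{1,1}$ with $m=2$, together with the singletons of the unmatched vertices. The Hadamard--Fischer step is applied to this mixed partition, so no spanning star forest is needed. For singletons, $\operatorname{Var}(X_v)\le 1/4=4^{-1}$, which is the same bound with factor $1$. For each edge block, Lemma~\ref{lem:upper-star-covariance} gives the bound $4^{-2}f(2)^2=4^{-2}\gamma^2$. The same computation then yields $q(G)\le D(n)\gamma^s$.

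There is no real obstacle; the only points to check are the reduction from $G[S_i]$ to the star, justified by the monotonicity of independent sets under adding edges, and that the zero column $a_0$ is an admissible value of $X_{S_i}$. It is, since $\mathbf 0_{m_i}$ is the indicator of the empty independent set, which the proof of Lemma~\ref{lem:upper-star-covariance} allows.
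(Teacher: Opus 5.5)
Your proposal is correct and follows the paper's own argument. Like the paper, you bound $q(G)$ by $|\det A|$ via Lemma~\ref{lem:upper-basis}, pass to $\det\Sigma$ through \eqref{eq:upper-det-covariance}, split over the star blocks (or, for the matching, edge blocks plus singleton blocks) with \eqref{eq:upper-hadamard-fischer}, and apply Lemma~\ref{lem:upper-star-covariance}, observing as the paper does that the stars need not be induced.
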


\begin{proof}
Partition the covariance matrix into the star vertex sets and apply
\eqref{eq:upper-hadamard-fischer}, Lemma~\ref{lem:upper-star-covariance},
\eqref{eq:upper-det-covariance} and Lemma~\ref{lem:upper-basis}.
The stars need not be induced and for a matching, use its edges as blocks
and the unmatched vertices as singleton blocks.
\end{proof}

For the star forest in Proposition~\ref{prop:upper-star-forest-bound}, an
upper bound on the component orders will suffice. Indeed, if
$2\le m_i\le R$ and $\sum_i m_i=n$, then
\begin{equation}\label{eq:upper-bounded-orders}
\prod_i f(m_i)\le\left(1+\frac R4\right)^{-n/(2R)}.
\end{equation}
The binomial theorem gives $(1+1/m)^m\ge2$ for integers $m\ge2$, thus
$$
f(m)^2\le\frac2{m+1}\le\frac4{m+4}
=\left(1+\frac m4\right)^{-1}.
$$
For $m\le R$, the weighted arithmetic--geometric mean inequality gives
$$
1+\frac m4
=\left(1-\frac mR\right)\cdot1
+\frac mR\left(1+\frac R4\right)
\ge\left(1+\frac R4\right)^{m/R}.
$$
Multiplying these estimates proves~\eqref{eq:upper-bounded-orders}.

\subsection{Small maximum matchings and completion of the proof}
\label{subsec:upper-small-matching}

Fix a maximum matching $M=\{u_iv_i:1\le i\le s\}$ and let $U$ be the
unmatched vertices. Define
$$
W=\{w\in U:\{u_i,v_i\}\subseteq N_G(w)\text{ for some }i\},
\qquad U_0=U\setminus W,
$$
and put $\eta=|W|$ and $a=|U_0|$, so $n=2s+\eta+a$.
Maximality of $M$ implies that $U$ is independent. Since $M$ is maximum,
two distinct vertices of $U$ cannot be adjacent to opposite ends of the
same matching edge: replacing that edge by the two incident edges would
enlarge $M$. Thus a matching edge can witness membership in $W$ for at
most one vertex, and $\eta\le s$.

No vertex of $U_0$ is adjacent to both ends of a matching edge. Together
with the preceding observation, this shows that all edges from $U_0$ to
$\{u_i,v_i\}$ meet one common endpoint. Denote it by $z_i$ and its partner
by $r_i$; if there is no such edge, choose either endpoint as $z_i$.
Put $Z=\{z_1,\ldots,z_s\}$. The neighbourhoods of vertices in $U_0$ are
nonempty subsets of $Z$ and form an antichain
by~\eqref{eq:upper-incomparable}. For integers $t\ge0$, put
$c_t=\binom{t}{\lfloor t/2\rfloor}$.

\begin{lemma}\label{lem:upper-bounded-star-forest}
If an integer $h\ge1$ satisfies $c_h<a$, then $G$ has a spanning star
forest with $s$ components of orders $m_1,\ldots,m_s$ satisfying
\begin{equation}\label{eq:upper-star-orders}
2\le m_i\le\left\lceil\frac ah\right\rceil+3\le\frac nh+4,
\qquad \sum_{i=1}^s m_i=n.
\end{equation}
\end{lemma}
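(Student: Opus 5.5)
The plan is to build the star forest directly from the maximum matching $M$: one star per matching edge $u_iv_i$, centred at $z_i$, containing $r_i$, possibly one vertex of $W$, and a balanced share of $U_0$. Every star then contains the edge $z_ir_i$, which gives $m_i\ge2$. Also $V(G)=\bigcup_i\{u_i,v_i\}\cup W\cup U_0$, so if every vertex of $W\cup U_0$ is attached to exactly one star, the forest is spanning and $\sum_i m_i=n$.

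\emph{Attaching $W$.} By the observation preceding the lemma, a matching edge witnesses membership in $W$ for at most one vertex. Hence each $w\in W$ can be assigned to an edge $u_iv_i$ with $\{u_i,v_i\}\subseteq N_G(w)$, with distinct $w$ going to distinct edges. Such a $w$ is adjacent to $z_i$, so it becomes a leaf of the star centred at $z_i$. This adds at most one vertex per star.

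\emph{Attaching $U_0$.} Every $x\in U_0$ has $\emptyset\ne N_G(x)\subseteq Z$. Put $k=\lceil a/h\rceil$. I want an assignment of each $x\in U_0$ to some $z_i\in N_G(x)$ such that each $z_i$ receives at most $k$ vertices. By Hall's theorem with capacities (duplicate each $z_i$ into $k$ copies), it suffices to show that
\[
|S|\le k\,|N_G(S)|\qquad\text{for every nonempty } S\subseteq U_0 .
\]
Let $t=|N_G(S)|\ge1$. The sets $N_G(x)$ for $x\in S$ form an antichain of subsets of $N_G(S)$, and distinct vertices of $U_0$ have distinct neighbourhoods, since equal neighbourhoods are comparable. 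So Sperner's theorem gives $|S|\le c_t$. There are two cases.
\begin{itemize}
\item If $t\ge h$, then $|S|\le a\le hk\le tk$.
\item If $t<h$, I will use that $c_t/t$ is nondecreasing in $t\ge1$. Then
\[
|S|\le c_t\le t\,\frac{c_h}{h}<t\,\frac ah\le tk .
\]
\end{itemize}
The monotonicity is a short check on consecutive ratios. For $t=2r$ with $r\ge1$, $c_{2r+1}/c_{2r}=(2r+1)/(r+1)\ge(2r+1)/(2r)$. For $t=2r+1$, $c_{2r+2}/c_{2r+1}=2\ge(2r+2)/(2r+1)$.

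\emph{Conclusion.} The star centred at $z_i$ consists of $z_i$, $r_i$, at most one vertex of $W$, and at most $k$ vertices of $U_0$, all adjacent to $z_i$. Therefore
\[
m_i\le k+3\le \frac ah+4\le\frac nh+4 ,
\]
which is \eqref{eq:upper-star-orders}.

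The main obstacle is the Hall-condition step. It relies on the antichain property of the neighbourhoods of $U_0$ inside $Z$, and on converting the hypothesis $c_h<a$ into the bound $c_t\le tk$ for all $t<h$; that is exactly where the monotonicity of $c_t/t$ is needed.
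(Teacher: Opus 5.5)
Your proposal is correct and follows the paper's proof essentially step for step. Both use Hall's theorem with $\lceil a/h\rceil$ copies of each $z_i$, verify the Hall condition from Sperner's bound $|S|\le c_t$ and the monotonicity of $c_t/t$ with the same case split on $t$ versus $h$, and attach each $w\in W$ to the centre of its unique witness edge. Your ratio check for the monotonicity of $c_t/t$ fills in a step the paper only asserts. Like the paper, you take the antichain property of the $U_0$-neighbourhoods from the preceding setup. That property is essential, since the lemma fails for a large star $K_{1,N}$, and it holds, for instance, when $G$ is fractionally vertex-critical.
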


\begin{proof}
Put $b=\lceil a/h\rceil$. For a nonempty $S\subseteq U_0$, let
$$
N_Z(S)=\bigcup_{v\in S}N_G(v),
\qquad t=|N_Z(S)|\ge1.
$$
The neighbourhoods of vertices in $S$ are distinct incomparable subsets
of this $t$-element set, so Sperner's theorem~\cite{Sperner1928} gives
$|S|\le c_t$. Comparing consecutive central binomial coefficients shows
that $c_t/t$ is nondecreasing for $t\ge1$. Thus, if $t\le h$, then
$$
\frac{|S|}{t}\le\frac{c_t}{t}\le\frac{c_h}{h}<\frac ah\le b;
$$
if $t>h$, then $|S|/t\le a/t\le b$. Hence
\begin{equation}\label{eq:upper-hall-capacity}
|S|\le b|N_Z(S)|\qquad(S\subseteq U_0).
\end{equation}
Apply Hall's theorem~\cite{Hall1935} to the bipartite graph between $U_0$
and $Z$, replacing each vertex of $Z$ by $b$ copies.
Condition~\eqref{eq:upper-hall-capacity} gives a matching saturating $U_0$.
Projecting the copies back to $Z$ assigns each vertex of $U_0$ to a
neighbouring centre, with at most $b$ vertices assigned to each centre.

Start with the edges $z_ir_i$ and attach these assigned vertices.
For each $w\in W$, choose a witness edge and attach $w$ to its centre.
No matching edge witnesses two vertices of $W$, so each star has at most
two matched vertices, $b$ vertices of $U_0$, and one vertex of $W$.
Retaining only these edges gives a spanning star forest with component
orders at most $b+3$.
\end{proof}

\begin{figure}[tbp]
\centering

\begin{tikzpicture}[
  x=1cm,y=1cm,
  font=\small,
  vertex/.style={circle,draw,fill=white,inner sep=0pt,minimum size=5pt},
  every path/.style={line cap=round,line join=round}
]
  \coordinate (r) at (0,0);
  \coordinate (z) at (2,0);
  \coordinate (u1) at (5,1.25);
  \coordinate (u2) at (5,0.65);
  \coordinate (u3) at (5,-0.45);
  \coordinate (w) at (4,-1.65);

  \draw[thick] (r) -- node[above=3pt] {in $M$} (z);
  \draw[blue!80!black,thick] (z) -- (u1);
  \draw[blue!80!black,thick] (z) -- (u2);
  \draw[blue!80!black,thick] (z) -- (u3);
  \draw[red!85!black,thick,dashed] (z) -- (w);

  \node[vertex,label=below:$r_i$] at (r) {};
  \node[vertex,fill=blue!15,label=below:$z_i$] at (z) {};
  \node[vertex] at (u1) {};
  \node[vertex] at (u2) {};
  \node at (5,0.08) {$\vdots$};
  \node[vertex] at (u3) {};
  \node[vertex,draw=red!85!black] at (w) {};
  \node[anchor=west] at (4.18,-1.65) {$w\in W$ (if assigned)};

  \draw[decorate,decoration={brace,amplitude=4pt}]
    (5.35,1.45) -- (5.35,-0.65);
  \node[anchor=west,align=left] at (5.65,0.40)
    {assigned vertices of $U_0$\\at most $b$};
  \node[anchor=north] at (3.55,-2.15)
    {component order at most $2+b+1=b+3$};
\end{tikzpicture}
\caption{A star obtained from a matching edge by the Hall assignment.
Only retained edges are drawn. Each centre receives at most $b$ vertices
of $U_0$ and at most one vertex of $W$.}
\label{fig:upper-star-forest}
\end{figure}

\begin{proof}[Proof of Theorem~\ref{thm:upper-main}]
Put $L=\log_2 n$, and let $s$ be the size of a maximum matching.
All estimates below hold for sufficiently large $n$.
If $s\ge2L^2$, Proposition~\ref{prop:upper-star-forest-bound} gives
\begin{equation}\label{eq:upper-large-matching}
\frac{q(G)}{D(n)}
\le\gamma^s
=\left(\frac{16}{27}\right)^{s/2}
\le2^{-s/4}
\le2^{-L^2/2}
=n^{-L/2},
\end{equation}
where $16/27<2^{-1/2}$.

Suppose $s<2L^2$. Then
$a=n-2s-\eta>n-6L^2>n/2$. Choose
\begin{equation}\label{eq:upper-h-choice}
h=\left\lfloor L+\frac12\log_2 L-2\right\rfloor.
\end{equation}
The elementary central-binomial estimate
$c_t\le2^t/\sqrt{t+1}$
and the fact that $h\ge L$ eventually give
$$
c_h\le\frac{2^h}{\sqrt h}
\le\frac{n\sqrt L}{4\sqrt h}
\le\frac n4<a.
$$
Lemma~\ref{lem:upper-bounded-star-forest} now gives a spanning star forest
with $m_i\le n/h+4$. Set $R=n/(h-1)$. Since $h=O(\log n)$, we have
$4h(h-1)\le n$ eventually, so $m_i\le n/h+4\le R$.
By~\eqref{eq:upper-star-product} and~\eqref{eq:upper-bounded-orders},
\begin{equation}\label{eq:upper-small-matching}
\frac{q(G)}{D(n)}
\le\left(1+\frac{n}{4(h-1)}\right)^{-(h-1)/2}
\le\left(\frac{4(h-1)}n\right)^{(h-1)/2}.
\end{equation}
As $L+\frac12\log_2 L-4\le h-1\le2L$ and $0<8L/n<1$, we have
\begin{equation}\label{eq:upper-final-estimate}
\begin{aligned}
\frac{q(G)}{D(n)}
&\le\left(\frac{8L}n\right)^{(L+\frac12\log_2L-4)/2}\\
&=2^{-\frac12(L+\frac12\log_2L-4)(L-\log_2L-3)}\\
&\le n^{-L/2+(\log_2L)/4+4}.
\end{aligned}
\end{equation}
Indeed, writing $z=\log_2 L$, the product in the exponent is
$$
L^2-\frac12Lz-7L-\frac12z^2+\frac52z+12
\ge L^2-\frac12Lz-8L
$$
for large $L$.
Together with~\eqref{eq:upper-large-matching}, this proves the theorem.
\end{proof}

\section{Proof of the lower bound}\label{sec:towers}\label{low:section}

We use the tower construction of Bonamy et al.\ to obtain large reduced
denominators. The main step is an exact evaluation of the fractional
chromatic number for every complete seed. We then show that the resulting
fraction is in lowest terms and estimate its denominator.

Write $\operatorname{Ind}(G)$ for the family of all independent sets of $G$,
including the empty set. For nonnegative weights $\lambda$ on
$\operatorname{Ind}(G)$ and $\mu$ on $V(G)$, put
$$
\lambda[v]=\sum_{\substack{I\in\operatorname{Ind}(G)\\v\in I}}\lambda(I),
\qquad
\mu(U)=\sum_{v\in U}\mu(v)\quad(U\subseteq V(G)).
$$
Thus $\lambda$ is a fractional colouring if $\lambda[v]\ge1$ for every
vertex $v$, and $\mu$ is a fractional clique if $\mu(I)\le1$ for every
$I\in\operatorname{Ind}(G)$. Let $\omega_f(G)$ be the maximum total weight
$\mu(V(G))$ of a fractional clique. Linear-programming duality gives
\begin{equation}\label{eq:duality}
\omega_f(G)=\chif(G).
\end{equation}
See Scheinerman and Ullman~\cite{ScheinermanUllman2011}.

\subsection{The tower construction}\label{subsec:tower-construction}\label{low:construction}

Let $H$ be a graph, and let $k\ge\ell\ge1$ be integers. Take the $\ell$th
power of the path on the $k+\ell$ vertices
$p_{1-\ell},\ldots,p_k$, and form its tensor product with $H$.
Denote the product vertices by $p_{i,h}$, where $1-\ell\le i\le k$ and
$h\in V(H)$. Two such vertices $p_{i,h}$ and $p_{j,h'}$ are adjacent exactly
when
$$
1\le |i-j|\le\ell\qquad\text{and}\qquad hh'\in E(H).
$$
For each $i\in\{1-\ell,\ldots,0\}$, identify all vertices of the fibre
$\{p_{i,h}:h\in V(H)\}$ into one vertex $z_{i+\ell}$, suppressing parallel
edges. Finally, add the edges of a copy of $H$ inside the terminal fibre
$\{p_{k,h}:h\in V(H)\}$. The resulting simple graph is $T_{k,\ell}(H)$.
This is the tower construction of Bonamy
et al.~\cite[Definition~12]{BonamyHylasovaKaiserSereni2026}.

Throughout this section, take $H=K_m$ with $m\ge2$. For a positive integer
$r$, write $[r]=\{1,\ldots,r\}$, and label $V(K_m)=[m]$. Put
$$
Z=\{z_1,\ldots,z_\ell\},
\qquad
P_i=\{p_{i,a}:a\in[m]\}\quad(i\in[k]).
$$
For $1\le s\le t\le k$, write $P(s,t)=\bigcup_{i=s}^{t}P_i$, and set
$P(s,t)=\varnothing$ when $s>t$. The order of the tower is
\begin{equation}\label{eq:order}
|V(T_{k,\ell}(K_m))|=mk+\ell.
\end{equation}

\begin{observation}\label{obs:adjacency}
In $T_{k,\ell}(K_m)$ the following hold.
\begin{enumerate}
\item[(i)] $Z$ induces a clique.
\item[(ii)] For $1\le t\le\ell$ and $1\le i\le k$, the vertex $z_t$ is
adjacent to every vertex of $P_i$ if and only if $i\le t$.
\item[(iii)] If $1\le i<j\le k$ and $a,b\in[m]$, then $p_{i,a}p_{j,b}$ is
an edge if and only if $j-i\le\ell$ and $a\ne b$.
\item[(iv)] The layers $P_1,\ldots,P_{k-1}$ are independent, whereas $P_k$
induces $K_m$.
\end{enumerate}
\end{observation}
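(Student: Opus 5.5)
This observation follows by unwinding the construction of $T_{k,\ell}(K_m)$ in Section~\ref{low:construction}. I will use the adjacency rule of the tensor product $P_{k+\ell}^{\ell}\times K_m$, namely that $p_{i,h}p_{j,h'}$ is an edge iff $1\le|i-j|\le\ell$ and $h\ne h'$, and then track the two modifications: the identification of the first $\ell$ fibres, and the added copy of $K_m$ on the terminal fibre. Each item is a direct check.

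For (iii), take $1\le i<j\le k$. Neither fibre is identified, since only indices $1-\ell,\ldots,0$ are. If $j<k$, the last step adds no edge between $p_{i,a}$ and $p_{j,b}$. If $j=k$, the added edges lie inside $P_k$, so they do not join $P_i$ and $P_j$ for $i<j$. Hence adjacency is exactly the tensor-product rule: $j-i\le\ell$ and $a\ne b$. For (iv), two vertices of the same layer $P_i$ have $|i-i|=0<1$, so the tensor product joins none of them; $P_i$ is independent for $i<k$, while the final step makes $P_k$ a copy of $K_m$.

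For (ii), the vertex $z_t$ arises from the fibre of index $i_0=t-\ell\in\{1-\ell,\ldots,0\}$. After identification, $z_t$ is adjacent to $p_{i,a}$ iff some $p_{i_0,h}$ is adjacent to $p_{i,a}$ in the product, that is, iff $1\le i-i_0\le\ell$ and some $h\ne a$ exists. The latter always holds because $m\ge2$. Since $i\ge1>i_0$, the condition reduces to $i-(t-\ell)\le\ell$, i.e.\ $i\le t$. In that case $z_t$ is adjacent to every vertex of $P_i$, and otherwise to none; this gives the stated equivalence. For (i), take $z_s,z_t$ with $s<t$, so the fibre indices differ by $t-s\in[1,\ell-1]\subseteq[1,\ell]$. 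Choosing $h\ne h'$ (possible since $m\ge2$) gives a product edge between the fibres, which survives as the edge $z_sz_t$.

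There is no real obstacle. The only points needing care are the index shift $z_{i+\ell}$ in (ii), and the repeated use of $m\ge2$ to guarantee that distinct labels $h\ne h'$ exist.
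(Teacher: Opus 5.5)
Your proof is correct and takes the same route as the paper: both unwind the tensor-product adjacency rule through the identification of the fibres $1-\ell,\ldots,0$ (with $z_t$ coming from fibre $t-\ell$) and the added terminal clique. Your version is simply more explicit, for instance in pointing out where $m\ge2$ is needed to supply two distinct labels.
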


\begin{proof}
Distinct fibres at path distance at most $\ell$ are joined by all pairs
with different labels. This gives (i) and (iii). Since $z_t$ comes from
the fibre at index $t-\ell$, it is adjacent to $P_i$ exactly when $i\le t$,
proving (ii). Each nonterminal fibre remains independent, and the added
terminal edges form $K_m$, proving (iv).
\end{proof}

Figure~\ref{fig:general} illustrates these adjacency rules.
\begin{figure}[tbp]
\centering
\begin{tikzpicture}[x=1cm,y=1cm,
 box/.style={draw,rounded corners=2pt,minimum width=1.15cm,
 minimum height=.75cm,align=center,font=\small}]
 \node[box,fill=setred!7] (Z) at (0,0) {$Z$\\$K_\ell$};
 \node[box] (P1) at (2,0) {$P_1$};
 \node at (3.25,0) {$\cdots$};
 \node[box] (Pi) at (4.5,0) {$P_i$};
 \node[box] (Pj) at (7,0) {$P_j$};
 \node at (8.25,0) {$\cdots$};
 \node[box,fill=setblue!7] (Pk) at (9.6,0) {$P_k$\\$K_m$};
 \draw[<->,thick] (Pi.north) to[bend left=28]
 node[above,font=\small] {$1\le j-i\le\ell$} (Pj.north);
 \node[align=center,font=\small] at (5.75,-1.0)
 {distinct labels are adjacent; equal labels are not};
 \draw[decorate,decoration={brace,mirror,amplitude=4pt}]
 (1.4,-1.5)--(8.7,-1.5)
 node[midway,below=6pt,font=\small] {each nonterminal layer is independent};
\end{tikzpicture}
\caption{The complete-seed tower. The cap vertex $z_t$ is joined to
all vertices in $P_1,\ldots,P_t$. Layers farther than $\ell$ apart
have no edges between them. The terminal layer is the only positive
layer with internal edges.}
\label{fig:general}
\end{figure}

\subsection{The weight sequence}\label{subsec:scaled-recurrence}

We use a single sequence for both weight assignments: its terms are read
backwards for vertex weights and forwards for independent-set weights.
Fix integers $m\ge2$ and $\ell\ge1$, and define $F_i^{(m,\ell)}$ by
\begin{equation}\label{eq:F-full}
F_i^{(m,\ell)}=0\quad(i\le\ell-2),
\qquad
F_{\ell-1}^{(m,\ell)}=\frac1{m-1},
\end{equation}
and
\begin{equation}\label{eq:F-rec-full}
(m-1)F_i^{(m,\ell)}=
\sum_{j=i-\ell}^{i-1}F_j^{(m,\ell)}\qquad(i\ge\ell).
\end{equation}
For integers $a\ge0$ and $b\ge1$, let
\begin{equation}\label{eq:S-full}
S_{m,\ell}(a,b)=\sum_{j=b-1}^{a+b-2}F_j^{(m,\ell)},
\end{equation}
with an empty sum equal to zero. Thus the first argument is the number of
terms, and the second specifies the starting index $b-1$. When $m$ and
$\ell$ are fixed, we omit them from the notation and write $F_i$ and
$S(a,b)$. The recurrence gives $F_i\ge0$ for all $i$ and $F_i>0$ for
$i\ge\ell-1$.

Concatenating two consecutive intervals of indices gives
\begin{equation}\label{eq:concat}
S(c,a+b)+S(a,b)=S(a+c,b)
\qquad(a,c\ge0,\ b\ge1).
\end{equation}
We will also use the two forms
\begin{align}
(m-1)F_{q+\ell-1}&=S(\ell,q)
&& (q\ge1),\label{eq:rS}\\
mF_i&=F_{i-\ell}+F_{i-\ell+1}+\cdots+F_i
&& (i\ge\ell)\label{eq:mblock}
\end{align}
of~\eqref{eq:F-rec-full}. The first is used in the tail estimate and
the second in the colouring construction.

\subsection{The exact fractional chromatic number}\label{sec:certificates}

\begin{theorem}\label{thm:exact}
For all integers $m\ge2$ and $k\ge\ell\ge1$,
$$
\chif\bigl(T_{k,\ell}(K_m)\bigr)
=\ell+1+\frac1{S_{m,\ell}(k,\ell)}.
$$
\end{theorem}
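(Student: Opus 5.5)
Proving Theorem~\ref{thm:exact} amounts to exhibiting two certificates of the same value $\ell+1+1/S(k,\ell)$, and LP duality~\eqref{eq:duality} then closes the argument: a fractional colouring $\lambda$ of total weight $\ell+1+1/S(k,\ell)$, giving $\chif\le$, and a fractional clique $\mu$ of the same total weight, giving $\omega_f\ge$. The sequence $F_i$ serves both constructions, as the remarks in Section~\ref{subsec:scaled-recurrence} suggest. It is read backwards from the terminal layer for the vertex weights, and forwards from the cap $Z$ for the colouring weights. Throughout I normalise by $S:=S(k,\ell)=F_{\ell-1}+\cdots+F_{k+\ell-2}$.

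\emph{Fractional clique.} Give each vertex of $P_k$ (the $K_m$ layer) weight proportional to $F_{\ell-1}=1/(m-1)$. More generally, give each vertex of $P_{k-j}$ weight $F_{\ell-1+j}/S$ for $0\le j\le k-1$, and give each $z_t$ weight $1$ apart from a correction term, tuned so that the total is $\ell+1+1/S$. Using~\eqref{eq:concat}, the layer total $m\sum_j F_{\ell-1+j}$ becomes $mS$, and the cap correction makes the books balance. The constraint $\mu(I)\le1$ is then checked on the maximal independent sets. By Observation~\ref{obs:adjacency}, these are: (a) a single $z_t$ together with all layers $P_i$ with $i>t$, restricted to the part compatible with label constraints; (b) sets with no $z$, which within a window of $\ell+1$ consecutive layers must use a single label $a$, plus full layers spaced more than $\ell$ apart. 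The key inequality is that a single-label selection over consecutive layers, or a full layer, has weight at most the relevant partial sum. Via~\eqref{eq:mblock}, $m F_i$ equals the sum of the $\ell+1$ preceding terms, so taking a full layer is exactly as good as taking one label on the $\ell+1$ surrounding layers. An induction along the layers, essentially a dynamic-programming bound on the best independent set in $P(s,k)$, yields $\mu(I)\le 1$.

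\emph{Fractional colouring.} Mirroring Bonamy et al., I would build $\lambda$ as $\ell+1$ ``colour classes'' of total weight $1$ each, plus an extra $1/S$. Each class is a mixture of independent sets of the form $\{z_t\}\cup$ (single-label sets on later layers) or full layers $P_i$. The weight on sets starting at layer $i$ is taken proportional to $F_{i-1}$ read forwards, so that each vertex $p_{i,a}$ is covered exactly once. Here~\eqref{eq:mblock} is the coverage identity: covering a full layer with weight $F_i$ substitutes for covering with single labels over $m$ choices. The terminal clique $P_k$ forces the extra $1/S$, since $(m-1)F$ mass must be made up there, and~\eqref{eq:rS} controls that tail.

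I expect the main obstacle to be the clique feasibility check, that is, correctly enumerating the maximal independent sets near the cap and the terminal layer, where the boundary terms of~\eqref{eq:rS} appear. I would first verify small cases ($\ell=1$, $m=2$, against Bonamy et al.) to pin down the exact cap weights, and then run an induction on $k$ that peels off the terminal layer.
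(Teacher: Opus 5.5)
Your overall strategy matches the paper's: two matching certificates built from $F$, closed by LP duality. On the clique side, your layer weights $\mu(p_{k-j,a})\propto F_{\ell-1+j}$ are exactly the paper's. Your layer-by-layer bound is its tail lemma: an independent set in $P(j,k)$ has weight at most $S(k-j+1,\ell)$. The reason is that a layer carrying two labels blocks the next $\ell$ layers, and $mF_{q+\ell-1}=S(\ell,q)+F_{q+\ell-1}$, with the zero terms $F_i$ ($i\le\ell-2$) absorbing the boundary at $P_k$.

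Two things are still missing on this side.
\begin{enumerate}
\item The cap weights are not something you can tune to the total, since that is one equation for $\ell$ unknowns. The set $\{z_t\}$ together with a single-label tail on $P(t+1,k)$ is independent and forces $\mu(z_t)\le S(k,\ell)-S(k-t,\ell)=S(t,k+\ell-t)$ (before normalising). You must take equality for every $t$.
\item Your claim that ``the books balance'' then amounts to the identity $\sum_{t=1}^{\ell}S(k-t,\ell)=(m-1)S(k,\ell)-1$. It follows by interchanging the double sum and applying \eqref{eq:F-rec-full}, with the $-1$ coming from $(m-1)F_{\ell-1}=1$. This is exactly where the $+1/S(k,\ell)$ originates, and you assert it without deriving it.
\end{enumerate}

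The genuine gap is the fractional colouring, i.e.\ the upper bound $\chif\le\ell+1+1/S(k,\ell)$. You give no explicit family of independent sets, no correct weights, and no coverage check. Moreover, the types you describe (``$z_t$ plus single-label sets'' or ``full layers'') never put a full layer and a single-label tail into one independent set, and such sets are unavoidable. Already $T_{3,1}(K_2)$ is the $7$-cycle $z_1,p_{1,1},p_{2,2},p_{3,1},p_{3,2},p_{2,1},p_{1,2}$. Complementary slackness against the uniform clique forces every optimal colouring onto the seven independent $3$-sets, and the resulting circulant system has the unique solution $1/3$. Hence $P_1\cup\{p_{3,1}\}$ and $P_1\cup\{p_{3,2}\}$ must receive positive weight.

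The missing idea is the paper's hybrid sets. Let $\rho(j)\in\{1,\dots,\ell+1\}$ be the residue of $j$ modulo $\ell+1$. The set $A_{j,a}$ consists of the full layers $P_i$ with $i<j$ and $i\equiv j\pmod{\ell+1}$, the label-$a$ tail $p_{j,a},\dots,p_{k,a}$, and the cap vertex $z_{\rho(j)-1}$ (none if $\rho(j)=1$); it gets weight $F_{j+\ell-2}$. These are supplemented by the tail-free sets $B_j$ ($k-\ell\le j<k$) with weight $\sum_{t=j+1}^{k}F_{t+\ell-2}$.

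Coverage is exact because a full layer lying in the $m$ sets $A_{j,b}$ receives $mF_{j+\ell-2}$. By \eqref{eq:block} this is a block of $\ell+1$ consecutive $F$'s. Since such layers are spaced exactly $\ell+1$ apart, the tail, the blocks and the $B$-term concatenate to $F_{\ell-1}+\dots+F_{k+\ell-2}$. The total $(\ell+1)S(k,\ell)+1$ then follows because $Z$ is a clique and $mF_{\ell-1}=1+F_{\ell-1}$.

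Your weights ``proportional to $F_{i-1}$'' also have the wrong offset. They agree with $F_{i+\ell-2}$ only when $\ell=1$, and for $\ell\ge2$ they vanish on tails starting at layers $i\le\ell-1$. Contrary to your expectation, the clique side is the routine half. As it stands, your proposal supports only the lower bound.
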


The case $m=2$ is the tower formula of Bonamy
et al.~\cite[Theorem~5]{BonamyHylasovaKaiserSereni2026}. For $\ell=1$,
reversing the layer order gives the generalized cone over $K_m$, so the evaluation also
follows from Tardif's formula~\cite{Tardif2001}.
Theorem~\ref{thm:exact} extends the evaluation to $m\ge3$ and $\ell\ge2$.

Fix $k\ge\ell\ge1$, write $G=T_{k,\ell}(K_m)$, and put
$S_*=S_{m,\ell}(k,\ell)$. We construct a fractional clique and a
fractional colouring of the same weight $\ell+1+1/S_*$. These give the
lower and upper bounds, respectively. The constructions extend the two
certificates in~\cite[Sections~3.1--3.2]{BonamyHylasovaKaiserSereni2026}:
the tail estimate extends their Lemma~8 to $m$ labels, and the
independent sets used for the colouring extend those in their
Lemmas~9 and~10.

\subsubsection{The fractional-clique construction}

Define nonnegative vertex weights $\mu$ by
\begin{align}
\mu(p_{i,a})&=F_{k-i+\ell-1}
&& (1\le i\le k,\ a\in[m]),\label{eq:mu-p}\\
\mu(z_j)&=S(j,k+\ell-j)
&& (1\le j\le\ell).\label{eq:mu-x}
\end{align}
The completeness of the seed gives two useful restrictions. An
independent set meeting a nonterminal layer in at least two vertices
misses each of the next $\ell$ layers that exists. It also contains at
most one vertex of $P_k$. These restrictions are used in the following
tail estimate.

\begin{lemma}\label{lem:tail}
Let $j\in\{1,\ldots,k+1\}$. If $J$ is an independent set contained in
$P(j,k)$, then
$$
\mu(J)\le S(k-j+1,\ell).
$$
\end{lemma}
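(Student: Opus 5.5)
We prove the bound by downward induction on $j$, from $j=k+1$ to $j=1$. The guiding heuristic is that $S(k-j+1,\ell)=\sum_{i=j}^{k}F_{k-i+\ell-1}$ is exactly the weight of a ``transversal'' independent set with one vertex in each layer $P_j,\ldots,P_k$, all with the same label. Any independent set should do no better, because taking two or more vertices of a layer forces it to skip the next $\ell$ layers. Recall that every vertex of $P_i$ has weight $F_{k-i+\ell-1}$ by~\eqref{eq:mu-p}, and all $F$'s are nonnegative.

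\emph{Base cases.} For $j=k+1$ we have $J=\varnothing$, and both sides are $0$. For $j=k$, Observation~\ref{obs:adjacency}(iv) shows that $J$ has at most one vertex of the clique $P_k$. Hence $\mu(J)\le F_{\ell-1}=S(1,\ell)$.

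\emph{Inductive step for $j<k$.} Write $i=k-j+\ell-1\ge\ell$ and $r=|J\cap P_j|$.
\begin{itemize}
\item If $r\le1$, then $J\setminus P_j$ is independent and lies in $P(j+1,k)$. Induction gives
\[
\mu(J)\le F_i+S(k-j,\ell).
\]
By~\eqref{eq:concat} (appending the single term $F_i$), the right-hand side equals $S(k-j+1,\ell)$.
\item If $r\ge2$, then $J$ contains $p_{j,a}$ and $p_{j,b}$ with $a\ne b$. Every vertex $p_{j',c}$ with $j<j'\le j+\ell$ has a label $c$ that differs from $a$ or from $b$. By Observation~\ref{obs:adjacency}(iii), such a vertex is adjacent to one of them. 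Therefore $J$ misses $P(j+1,\min(j+\ell,k))$. Since $r\le m$, we get
\[
\mu(J)\le mF_i+\mu\bigl(J\cap P(j+\ell+1,k)\bigr).
\]
\end{itemize}

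It remains to compare the second case with $S(k-j+1,\ell)$, using~\eqref{eq:mblock}, which gives $mF_i=F_{i-\ell}+\cdots+F_i$.
\begin{itemize}
\item If $j+\ell\le k$, the tail is bounded by $S(k-j-\ell,\ell)$ by induction. By~\eqref{eq:concat}, $S(k-j+1,\ell)$ splits as $S(k-j-\ell,\ell)$, covering indices $\ell-1,\ldots,k-j-2$, plus the $\ell+1$ consecutive terms with indices $k-j-1=i-\ell$ through $i$. This second block is exactly $mF_i$, so the bound holds with equality in the worst case.
\item If $j+\ell>k$, the tail is empty. Then $S(k-j+1,\ell)=F_{\ell-1}+\cdots+F_i$, while $mF_i=F_{i-\ell}+\cdots+F_i$. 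The extra indices $i-\ell=k-j-1,\ldots,\ell-2$ all satisfy $\le\ell-2$, so those $F$'s vanish by~\eqref{eq:F-full}. Hence again $mF_i=S(k-j+1,\ell)$.
\end{itemize}

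The only delicate point is this index bookkeeping in the $r\ge2$ case. We must check that the block replaced by $mF_i$ aligns exactly with the skipped layers $j+1,\ldots,j+\ell$ together with layer $j$. When $j+\ell>k$, the vanishing of $F$ below index $\ell-1$ absorbs the overhang. The adjacency claim itself is immediate from Observation~\ref{obs:adjacency}(iii).
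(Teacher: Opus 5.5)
Your proof is correct and takes essentially the same route as the paper. Both arguments induct on the tail length and split on whether $|J\cap P_j|\le 1$ or $|J\cap P_j|\ge 2$. In the second case the next $\ell$ layers are blocked, and the vanishing of $F_i$ for $i\le\ell-2$ absorbs the overhang when $j+\ell>k$. The differences are only cosmetic: you use \eqref{eq:mblock} directly where the paper combines \eqref{eq:rS} with \eqref{eq:concat}, and you treat $j=k$ as a separate base case, whereas the paper observes that the clique $P_k$ rules out $c\ge 2$ there.
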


\begin{proof}
We use induction on $k-j+1$. The case $j=k+1$ is immediate. Suppose
$j\le k$, and put $J'=J\setminus P_j$ and $c=|J\cap P_j|$. Then
$$
\mu(J)=\mu(J')+cF_{k-j+\ell-1}.
$$
If $c\le1$, the induction hypothesis applied to $J'\subseteq P(j+1,k)$
gives
$$
\mu(J)\le S(k-j,\ell)+F_{k-j+\ell-1}=S(k-j+1,\ell).
$$

Suppose $c\ge2$. Since $P_k$ is a clique, $j<k$. Every vertex in the
next $\ell$ layers that exist has a label different from that of at least
one of two chosen vertices in $J\cap P_j$, and is therefore adjacent to
it by Observation~\ref{obs:adjacency}(iii). Hence
$$
J'\subseteq P(j+\ell+1,k).
$$
Put $q=k-j\ge1$. If $q\ge\ell$, the induction hypothesis gives
$\mu(J')\le S(q-\ell,\ell)$, including the empty-tail case $q=\ell$.
Since $c\le m$, equations~\eqref{eq:rS} and
\eqref{eq:concat} yield
\begin{align*}
\mu(J)&\le S(q-\ell,\ell)+mF_{q+\ell-1}\\
&=S(q-\ell,\ell)+S(\ell,q)+F_{q+\ell-1}\\
&=S(q,\ell)+F_{q+\ell-1}=S(q+1,\ell).
\end{align*}
If $1\le q<\ell$, all remaining layers are excluded, so $J'=\varnothing$.
The initial zero terms give
$$
S(\ell,q)=\sum_{i=q-1}^{q+\ell-2}F_i
=\sum_{i=\ell-1}^{q+\ell-2}F_i=S(q,\ell).
$$
Thus
$$
\mu(J)\le mF_{q+\ell-1}
=S(q,\ell)+F_{q+\ell-1}=S(q+1,\ell),
$$
completing the induction.
\end{proof}

Figure~\ref{fig:tail-blocking} illustrates the second case of the proof.
\begin{figure}[tbp]
\centering
\begin{tikzpicture}[x=1cm,y=1cm,
 block/.style={draw,minimum width=1cm,minimum height=1.4cm},
 point/.style={circle,draw,fill=white,inner sep=1.8pt}]
 \node[block] (first) at (0,0) {};
 \node[point,fill=setblue,draw=setblue] at (0,.38) {};
 \node[point,fill=setblue,draw=setblue] at (0,-.05) {};
 \node[point] at (0,-.43) {};
 \node at (0,.98) {$P_j$};
 \node[font=\small,align=center] at (0,-1.25) {two distinct\\labels selected};
 \node[draw,fill=setred!7,minimum width=4.3cm,minimum height=1.4cm]
 (blocked) at (3.2,0) {$P_{j+1}\quad\cdots\quad P_{j+\ell}$};
 \node[font=\small,text=setred,align=center] at (3.2,-1.25)
 {no vertex can be selected};
 \draw[->,thick] (.6,0)--(.95,0);
 \node[draw,minimum width=3.5cm,minimum height=1.4cm] at (8,0)
 {$P_{j+\ell+1}\quad\cdots\quad P_k$};
 \node[font=\small] at (8,-1.25) {apply the tail bound here};
 \draw[decorate,decoration={brace,amplitude=4pt}]
 (1.05,.9)--(5.35,.9) node[midway,above=5pt,font=\small] {$\ell$ forbidden layers};
\end{tikzpicture}
\caption{The second case of the tail estimate. Two different labels
in $P_j$ together exclude every vertex in each of the next $\ell$
layers that exists. If $j+\ell\ge k$, no tail remains.}
\label{fig:tail-blocking}
\end{figure}

\begin{lemma}\label{lem:mu-feasible}
Every independent set $I$ of $G$ satisfies $\mu(I)\le S_*$. Consequently,
$\mu/S_*$ is a fractional clique.
\end{lemma}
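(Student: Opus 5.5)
Split an arbitrary independent set $I$ according to how it meets $Z$. Since $Z$ is a clique by Observation~\ref{obs:adjacency}(i), $I$ contains at most one vertex $z_t$ of $Z$.

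\emph{Case 1: $I\cap Z=\varnothing$.} Then $I\subseteq P(1,k)$, and Lemma~\ref{lem:tail} with $j=1$ gives $\mu(I)\le S(k,\ell)=S_*$ directly.

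\emph{Case 2: $I\cap Z=\{z_t\}$ for some $1\le t\le\ell$.} By Observation~\ref{obs:adjacency}(ii), $z_t$ is adjacent to every vertex of $P_1,\ldots,P_t$. Hence $I\setminus\{z_t\}\subseteq P(t+1,k)$; note $t+1\le\ell+1\le k+1$, so the lemma applies. Lemma~\ref{lem:tail} with $j=t+1$ gives
$$
\mu(I)\le \mu(z_t)+S(k-t,\ell)=S(t,k+\ell-t)+S(k-t,\ell).
$$
By the concatenation identity~\eqref{eq:concat} with $a=k-t$, $b=\ell$ and $c=t$, we have $S(t,(k-t)+\ell)+S(k-t,\ell)=S(k,\ell)=S_*$. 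This is exactly why the weights $\mu(z_j)$ were chosen as they were: $\mu(z_t)$ is the block of $F$'s that completes the tail sum to the full sum. So $\mu(I)\le S_*$ in this case as well.

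Nonnegativity of $\mu$ follows from $F_i\ge0$. Moreover $S_*>0$, since the sum defining it contains $F_{\ell-1}>0$. Therefore $\mu/S_*$ is nonnegative and gives every independent set weight at most $1$, so it is a fractional clique.

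The only point needing care is the index bookkeeping in Case 2: checking $k-t\ge0$ and that the starting indices match in~\eqref{eq:concat}. The rest is immediate from Lemma~\ref{lem:tail}.
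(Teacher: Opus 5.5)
Your proof is correct and follows essentially the same route as the paper: you split on $|I\cap Z|\le 1$, apply Lemma~\ref{lem:tail} with $j=1$ or $j=t+1$, and then close Case~2 with the concatenation identity~\eqref{eq:concat}. Your extra checks ($t+1\le k+1$, nonnegativity of $\mu$, and $S_*>0$) are details the paper leaves implicit.
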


\begin{proof}
Since $Z$ is a clique, $I$ contains at most one vertex of $Z$. If
$I\cap Z=\varnothing$, Lemma~\ref{lem:tail} with $j=1$ gives the bound.
Otherwise, write $I\cap Z=\{z_t\}$. By
Observation~\ref{obs:adjacency}(ii), $I\setminus Z\subseteq P(t+1,k)$,
and hence
$$
\mu(I)\le S(k-t,\ell)+S(t,k+\ell-t)=S(k,\ell)=S_*,
$$
where the equality follows from~\eqref{eq:concat}.
\end{proof}

The total vertex weight is given by the next proposition;
compare~\cite[Assertion~7]{BonamyHylasovaKaiserSereni2026}.

\begin{proposition}\label{prop:mu-total}
The vertex weights satisfy $\mu(V(G))=(\ell+1)S_*+1$.
\end{proposition}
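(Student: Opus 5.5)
The plan is to compute both parts of $\mu(V(G))$ explicitly as sums of the $F_i$, and then prove the resulting identity by a telescoping invariant that is constant because of the recurrence~\eqref{eq:F-rec-full}.

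\textbf{Step 1: the path layers.} Each layer $P_i$ has $m$ vertices of weight $F_{k-i+\ell-1}$. As $i$ runs over $1,\dots,k$, the index $k-i+\ell-1$ runs over $\ell-1,\dots,k+\ell-2$. Hence
$$
\mu\bigl(P(1,k)\bigr)=m\,S(k,\ell)=mS_*.
$$

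\textbf{Step 2: the cap vertices.} Put $N=k+\ell-2$. By~\eqref{eq:S-full}, $\mu(z_j)=S(j,k+\ell-j)=\sum_{i=N-j+1}^{N}F_i$ is the sum of the last $j$ terms ending at index $N$. Summing over $j=1,\dots,\ell$, the term $F_{N-r}$ is counted once for each $j>r$, so
$$
\mu(Z)=\sum_{r=0}^{\ell-1}(\ell-r)F_{N-r}.
$$

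\textbf{Step 3: reduction to an identity.} Since $F_i=0$ for $i\le\ell-2$, we have $S_*=\sum_{i\le N}F_i$. The claim $\mu(V(G))=(\ell+1)S_*+1$ is therefore equivalent to $\Phi(N)=1$, where for $N\ge\ell-1$ we define
$$
\Phi(N)=(m-1-\ell)\sum_{i\le N}F_i+\sum_{r=0}^{\ell-1}(\ell-r)F_{N-r}.
$$

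\textbf{Step 4: $\Phi$ is constant.} In the weighted sum, shifting $N$ to $N-1$ changes each coefficient by one. This gives
$$
\sum_{r=0}^{\ell-1}(\ell-r)F_{N-r}-\sum_{r=0}^{\ell-1}(\ell-r)F_{N-1-r}=\ell F_N-\sum_{r=1}^{\ell}F_{N-r}.
$$
Adding the change $(m-1-\ell)F_N$ in the first part of $\Phi$, we get
$$
\Phi(N)-\Phi(N-1)=(m-1)F_N-\sum_{j=N-\ell}^{N-1}F_j=0\qquad(N\ge\ell),
$$
by~\eqref{eq:F-rec-full}.

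\textbf{Step 5: base case.} At $N=\ell-1$, the only nonzero $F_i$ with $i\le N$ is $F_{\ell-1}=1/(m-1)$. So
$$
\Phi(\ell-1)=\frac{m-1-\ell}{m-1}+\frac{\ell}{m-1}=1.
$$
Since $k\ge1$ gives $N\ge\ell-1$, induction on $N$ yields $\Phi(N)=1$, which proves the proposition.

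The only delicate point is the bookkeeping in Step 2, namely checking that the ranges in~\eqref{eq:S-full} really produce the coefficients $\ell-r$, together with the correct handling of the initial zero terms. Everything else is the telescoping in Step 4.
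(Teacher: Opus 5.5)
Your proof is correct. It differs from the paper's only in how the cap weights are handled.

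The paper uses the concatenation identity \eqref{eq:concat} to write each cap weight as the complement $S(t,k+\ell-t)=S_*-S(k-t,\ell)$. This gives $\mu(Z)=\ell S_*-\sum_{t=1}^{\ell}S(k-t,\ell)$. Exchanging the order of summation, using the zero terms below index $\ell-1$, turns the last sum into $\sum_{i=\ell}^{k+\ell-2}\sum_{t=1}^{\ell}F_{i-t}$. Applying \eqref{eq:F-rec-full} at every $i$ simultaneously then gives $(m-1)S_*-1$ in one computation.

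You instead keep $\mu(Z)$ as the top-weighted sum $\sum_{r=0}^{\ell-1}(\ell-r)F_{N-r}$, recast the claim as the invariant $\Phi(N)=1$, and check it one index at a time by induction on $N$.

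Both arguments rest on the same fact. Your telescoped difference $\Phi(N)-\Phi(\ell-1)$ is exactly the sum of the instances of \eqref{eq:F-rec-full} at indices $\ell,\dots,N$ that the paper invokes in a single step. In both, the extra $+1$ comes from $(m-1)F_{\ell-1}=1$.

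The paper's version is a shorter direct computation. It reuses \eqref{eq:concat}, which needs $k-t\ge 0$ and so relies on the standing hypothesis $k\ge\ell$. Yours is a self-contained identity about the sequence $F$ that holds for every $N\ge\ell-1$, i.e.\ for every $k\ge1$. The cost is the more delicate coefficient bookkeeping in your Steps 2 and 4, which you carried out correctly.
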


\begin{proof}
Each of the $m$ labels contributes $S_*$ on the positive layers, so
$$
\mu(V(G))=mS_*+\sum_{t=1}^{\ell}S(t,k+\ell-t).
$$
By~\eqref{eq:concat},
$S(t,k+\ell-t)=S_*-S(k-t,\ell)$. Also,
\begin{align*}
\sum_{t=1}^{\ell}S(k-t,\ell)
&=\sum_{i=\ell}^{k+\ell-2}\sum_{t=1}^{\ell}F_{i-t}\\
&=(m-1)\sum_{i=\ell}^{k+\ell-2}F_i
=(m-1)S_*-1.
\end{align*}
The first equality uses the zero terms below index $\ell-1$, the second
uses~\eqref{eq:F-rec-full}, and the last uses
$(m-1)F_{\ell-1}=1$. Substitution gives
$$
\mu(V(G))=mS_*+\ell S_*-((m-1)S_*-1)=(\ell+1)S_*+1.
$$
\end{proof}

By Lemma~\ref{lem:mu-feasible} and
Proposition~\ref{prop:mu-total}, $\mu/S_*$ is a fractional clique of
weight $\ell+1+1/S_*$. Thus~\eqref{eq:duality} gives the lower bound
\begin{equation}\label{eq:exact-lower}
\chif(G)\ge\ell+1+\frac1{S_*}.
\end{equation}

\subsubsection{The fractional-colouring construction}\label{sec:primal}

Set $\Delta=\ell+1$. For any integer $j$, let $\rho(j)$ be the unique
representative of $j$ modulo $\Delta$ in $\{1,\ldots,\Delta\}$, and let
$e(j)$ be the largest integer at most $k$ congruent to $j$ modulo
$\Delta$. Thus
\begin{equation}\label{eq:double}
e(j)=j+\left\lfloor\frac{k-j}{\Delta}\right\rfloor\Delta
\in\{k-\ell,\ldots,k\}.
\end{equation}
In particular, $i\le e(i)\le k$ for $i\in[k]$. For every integer $j$, put
$$
Z_j=
\begin{cases}
\{z_{\rho(j)-1}\},&\rho(j)\ne1,\\
\varnothing,&\rho(j)=1.
\end{cases}
$$
For $j\in[k]$ and $a\in[m]$, define
\begin{equation}\label{eq:A}
A_{j,a}=\{p_{i,a}:j\le i\le k\}
\cup\bigcup_{\substack{1\le i<j\\i\equiv j\pmod\Delta}}P_i
\cup Z_j.
\end{equation}
For $j\in\{k-\ell,\ldots,k-1\}$, define
\begin{equation}\label{eq:B}
B_j=\bigcup_{\substack{1\le i\le j\\i\equiv j\pmod\Delta}}P_i
\cup Z_j,
\end{equation}
and set $B_k=\varnothing$. When $k=\ell$, the index $j=0$ occurs and
$B_0=\{z_\ell\}$.

The two families use the same residue-class pattern before the switching
point. The set $A_{j,a}$ switches at $P_j$ to a one-label tail, meeting
$P_k$ only in $p_{k,a}$. For $j<k$, the set $B_j$ has no tail: it selects
the full layers in its residue class up to $j$, together with $Z_j$.
The separate convention $B_k=\varnothing$ makes the coverage formulas
uniform.

All sets in~\eqref{eq:A}--\eqref{eq:B} are independent.
Indeed, their full layers are nonterminal and are separated by multiples
of $\ell+1$. In an $A$-set, these layers are also separated from the
same-label tail by at least $\ell+1$; the tail meets the terminal clique
in only one vertex. A selected cap vertex is $z_{\rho(j)-1}$, whereas
every selected positive layer has index at least $\rho(j)$, so no cap
edge occurs. The exceptional sets $B_k=\varnothing$ and
$B_0=\{z_\ell\}$, when present, are independent as well.

Figure~\ref{fig:supports} illustrates the two types of independent sets.
\begin{figure}[tbp]
\centering
\begingroup
\newcommand{\drawtowerpanel}[2]{
\begin{tikzpicture}[x=1.12cm,y=.65cm,
 vertex/.style={circle,draw=black!45,fill=white,inner sep=1.7pt},
 chosen/.style={circle,draw=#1,fill=#1,inner sep=2.5pt},
 edge/.style={draw=black!14,line width=.35pt}]
 \foreach \i in {1,...,6}{
   \foreach \a/\yy in {1/2,2/1,3/0}{
     \coordinate (p\i\a) at ({1.65+1.55*(\i-1)},\yy);
   }
   \node[font=\small] at ({1.65+1.55*(\i-1)},2.65) {$P_{\i}$};
 }
 \coordinate (z1) at (0,1.6);
 \coordinate (z2) at (0,.4);
 \draw[edge] (z1)--(z2);
 \foreach \a in {1,2,3}{
   \draw[edge] (z1)--(p1\a);
   \draw[edge] (z2)--(p1\a);
   \draw[edge] (z2) to[bend right=8] (p2\a);
 }
 \foreach \i in {1,...,5}{
   \foreach \j in {2,...,6}{
     \pgfmathtruncatemacro{\gap}{\j-\i}
     \ifnum\gap>0\ifnum\gap<3
       \foreach \a in {1,2,3}{\foreach \b in {1,2,3}{
         \ifnum\a=\b\else
           \pgfmathtruncatemacro{\labelgap}{abs(\a-\b)}
           \ifnum\gap=2
             \ifnum\labelgap=2
               \draw[edge] (p\i\a) to[bend left=8] (p\j\b);
             \else
               \draw[edge] (p\i\a)--(p\j\b);
             \fi
           \else
             \draw[edge] (p\i\a)--(p\j\b);
           \fi
         \fi
       }}
     \fi\fi
   }
 }
 \draw[draw=black!40] (p61)--(p62)--(p63);
 \draw[draw=black!40] (p61) to[bend left=32] (p63);
 \foreach \i in {1,...,6}{\foreach \a in {1,2,3}{
   \node[vertex] at (p\i\a) {};
 }}
 \node[vertex] at (z1) {};
 \node[vertex] at (z2) {};
 \foreach \a/\yy in {1/2,2/1,3/0}{
   \node[font=\scriptsize,anchor=west] at (10,\yy) {label $\a$};
 }
 \node[font=\small,anchor=east] at (-.15,1.6) {$z_1$};
 \node[font=\small,anchor=east] at (-.15,.4) {$z_2$};
 \foreach \selected in {#2}{\node[chosen] at (\selected) {};}
\end{tikzpicture}}
{\small\color{setblue}$A_{4,2}=P_1\cup\{p_{4,2},p_{5,2},p_{6,2}\}$}\par\smallskip
\drawtowerpanel{setblue}{p11,p12,p13,p42,p52,p62}
\par\medskip
{\small\color{setred}$B_5=\{z_1\}\cup P_2\cup P_5$}\par\smallskip
\drawtowerpanel{setred}{z1,p21,p22,p23,p51,p52,p53}
\endgroup
\caption{The independent sets $A_{4,2}$ (blue) and $B_5$ (red) in
$T_{6,2}(K_3)$. Filled vertices are selected; hollow vertices are not.
All graph edges are shown in grey. The $A$-set changes from a full
layer to a single-label tail, whereas the $B$-set uses full layers
three positions apart and avoids the terminal clique.}
\label{fig:supports}
\end{figure}

The indexed sets are pairwise distinct, so the weights below are well
defined. Different labels distinguish $A$-sets in $P_k$. For fixed $a$
and $i<j$, the set $A_{i,a}$ meets $P_i$ in one vertex, whereas $A_{j,a}$
meets it either in all $m$ vertices or not at all. Every $A$-set meets
$P_k$, and every $B$-set avoids it. Nonempty $B$-sets with positive
index are distinguished by their last full layer;
$B_0=\{z_\ell\}$, when present, and $B_k=\varnothing$ are distinct
from these and from each other.

Define nonnegative weights $\lambda$ on $\operatorname{Ind}(G)$ by
\begin{align}
\lambda(A_{j,a})&=F_{j+\ell-2}
&& (j\in[k],\ a\in[m]),\label{eq:lambdaA}\\
\lambda(B_j)&=\sum_{t=j+1}^{k}F_{t+\ell-2}
&& (k-\ell\le j\le k),\label{eq:lambdaB}
\end{align}
and set $\lambda(I)=0$ for all other independent sets. In particular,
$\lambda(B_k)=0$.

To calculate the coverage, we use~\eqref{eq:mblock} in the form
\begin{equation}\label{eq:block}
mF_{j+\ell-2}=\sum_{q=j-2}^{j+\ell-2}F_q\qquad(j\ge2).
\end{equation}
When successive indices $j$ differ by $\ell+1$, the intervals
$[j-2,j+\ell-2]$ are consecutive, with neither gaps nor overlaps.

\begin{lemma}\label{lem:coverage}
Every vertex $v\in V(G)$ satisfies $\lambda[v]=S_*$.
\end{lemma}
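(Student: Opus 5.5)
The plan is to verify the identity $\lambda[v]=S_*$ directly, splitting into the three kinds of vertices: the terminal layer $P_k$, the nonterminal layers $P_i$ with $i<k$, and the cap vertices $z_t$. In each case I will list exactly which sets of~\eqref{eq:A}--\eqref{eq:B} contain $v$. I will then rewrite every $A$-weight coming from a full layer through the block identity~\eqref{eq:block}. The aim is to show that the resulting $F$-indices tile the interval $[\ell-1,k+\ell-2]$ exactly once, so the total is $S(k,\ell)=S_*$. Throughout, the vanishing of $F_q$ for $q\le\ell-2$ absorbs any part of a block that starts below $\ell-1$.

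For $v=p_{k,a}$, the only sets with positive weight containing $v$ are $A_{j,a}$ with $j\in[k]$. The $B$-sets avoid $P_k$, and $A_{j,b}$ with $b\ne a$ meets $P_k$ only in $p_{k,b}$. Hence $\lambda[v]=\sum_{j=1}^kF_{j+\ell-2}=S(k,\ell)$.

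For $v=p_{i,a}$ with $i<k$, there are three contributions.
\begin{itemize}
\item The tails $A_{j,a}$ with $j\le i$ contribute $\sum_{j=1}^{i}F_{j+\ell-2}$, which covers the indices $\ell-1,\dots,i+\ell-2$.
\item The full layers come from $A_{j,b}$ for every label $b$, with $i<j\le k$ and $j\equiv i\pmod\Delta$. These are $j=i+\Delta,i+2\Delta,\dots,e(i)$. By~\eqref{eq:block}, each contributes $mF_{j+\ell-2}=\sum_{q=j-2}^{j+\ell-2}F_q$. These blocks are consecutive and cover $i+\ell-1,\dots,e(i)+\ell-2$.
\item The set $B_{e(i)}$ contributes $\sum_{t=e(i)+1}^{k}F_{t+\ell-2}$. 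It is present because $e(i)\in\{k-\ell,\dots,k\}$ by~\eqref{eq:double}. When $e(i)=k$, the convention $B_k=\varnothing$ makes this term zero, and it is also the correct value of the formula.
\end{itemize}
Before adding these up, I must check that no other $B_j$ contains $v$. The $B_j$ with $j\equiv i$ and $k-\ell\le j<k$ is unique, since the range has length $\ell<\Delta$, and it contains $P_i$ iff $i\le j$. This holds for $j=e(i)$. The three pieces then concatenate via~\eqref{eq:concat} to $S(k,\ell)$.

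For $v=z_t$ with $1\le t\le\ell$, the sets containing $v$ are those indexed by $j$ with $\rho(j)=t+1$, that is, $j\equiv t+1\pmod\Delta$.
\begin{itemize}
\item If $t+1\le k$, the $A$-sets $A_{j,b}$ with $j=t+1,t+1+\Delta,\dots,e(t+1)$ and all labels $b$ contribute blocks $[j-2,j+\ell-2]$. These tile $[t-1,e(t+1)+\ell-2]$, and the indices from $t-1$ up to $\ell-2$ carry zero weight. The set $B_{e(t+1)}$ supplies the remaining indices up to $k+\ell-2$.
\item If $t+1>k$, which forces $t=\ell=k$, there are no $A$-sets. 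Only $B_0=\{z_\ell\}$ contains $v$, with weight $\sum_{s=1}^kF_{s+\ell-2}=S_*$.
\end{itemize}

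The main obstacle is purely bookkeeping at the boundaries. I need to confirm that the relevant $B$-index always lies in $\{k-\ell,\dots,k\}$, including the $B_0$ case. I also need the conventions $B_k=\varnothing$ and $\lambda(B_k)=0$ to give exactly the right empty sums. Finally, I must check that the first block for $z_t$ (starting at $t-1\le\ell-1$) and the first tail block for $p_{i,a}$ meet at index $\ell-1$ without overlap. I will handle these by stating once the tiling claim: for $j_0\ge2$, $\sum_{r\ge0,\ j_0+r\Delta\le k}mF_{j_0+r\Delta+\ell-2}=\sum_{q=j_0-2}^{e(j_0)+\ell-2}F_q$. I then apply it in both nontrivial cases.
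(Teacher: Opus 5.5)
Your proposal is correct and follows essentially the same route as the paper. You list exactly which $A$- and $B$-sets contain each vertex, expand every full-layer contribution with \eqref{eq:block}, and concatenate the resulting $F$-index intervals to $S(k,\ell)$, treating the cap case $t=\ell=k$ via $B_0$. The only differences are cosmetic: you treat $P_k$ separately, which the paper absorbs into \eqref{eq:pcov}, and you check explicitly that no other $B_j$ contains $p_{i,a}$, which the paper only asserts.
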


\begin{proof}
For $2\le i\le\Delta$, the indexed sets containing $z_{i-1}$ are the
$A_{j,a}$ with $j\equiv i\pmod\Delta$, together with $B_{e(i)}$ when
$e(i)<k$. Hence
\begin{equation}\label{eq:xcov}
\lambda[z_{i-1}]
=m\sum_{\substack{1\le j\le k\\j\equiv i\pmod\Delta}}F_{j+\ell-2}
+\sum_{t=e(i)+1}^{k}F_{t+\ell-2}.
\end{equation}
When the first sum is nonempty, its $j$-values are
$i,i+\Delta,\ldots,e(i)$. Equation~\eqref{eq:block}
expands it into the terms with indices $i-2,\ldots,e(i)+\ell-2$.
The second sum continues from the next index to $k+\ell-2$. Their sum
is $S_*$, since $i-2\le\ell-1$ and the earlier terms vanish.
The only empty-first-sum case is $k=\ell$ and $i=\ell+1$; then
$e(i)=0$, and the second sum alone equals $S_*$.

Now consider $p_{i,a}$, where $i\in[k]$ and $a\in[m]$. It lies in
$A_{j,a}$ for $1\le j\le i$, and in all $m$ sets $A_{j,b}$ when
$i<j\le k$ and $j\equiv i\pmod\Delta$. Its only possible
$B$-contribution is that of $B_{e(i)}$, interpreted as zero when
$e(i)=k$. Therefore
\begin{equation}\label{eq:pcov}
\lambda[p_{i,a}]
=\sum_{j=1}^{i}F_{j+\ell-2}
+m\sum_{\substack{i<j\le k\\j\equiv i\pmod\Delta}}F_{j+\ell-2}
+\sum_{t=e(i)+1}^{k}F_{t+\ell-2}.
\end{equation}
The first sum covers the indices $[\ell-1,i+\ell-2]$. By
\eqref{eq:block}, the second covers
$[i+\ell-1,e(i)+\ell-2]$ when nonempty, and the third covers
$[e(i)+\ell-1,k+\ell-2]$. Omitting empty intervals, these are
consecutive and give
$$
\lambda[p_{i,a}]=\sum_{q=\ell-1}^{k+\ell-2}F_q=S_*.
$$
\end{proof}

Figure~\ref{fig:coverage} summarizes the interval concatenation in
\eqref{eq:pcov}.

\begin{figure}[tbp]
\centering
\begin{tikzpicture}[x=1.15cm,y=1cm,font=\small]
 \draw[very thick,setblue] (0,0)--(3.3,0);
 \draw[very thick,forestgreen] (3.5,0)--(7.2,0);
 \draw[very thick,setred] (7.4,0)--(10.4,0);
 \foreach \x in {0,3.3,3.5,7.2,7.4,10.4}{\draw (\x,-.08)--(\x,.08);}
 \node[above=5pt] at (1.65,0) {one-label tails};
 \node[above=5pt] at (5.35,0) {full layers, expanded};
 \node[above=5pt] at (8.9,0) {last $B$-set};
 \node[below=5pt,align=center] at (1.65,0)
 {$[\ell-1,\,i+\ell-2]$};
 \node[below=5pt,align=center] at (5.35,0)
 {$[i+\ell-1,\,e(i)+\ell-2]$};
 \node[below=5pt,align=center] at (8.9,0)
 {$[e(i)+\ell-1,\,k+\ell-2]$};
\end{tikzpicture}
\caption{The three sums covering $p_{i,a}$ in \eqref{eq:pcov}.
The intervals refer to subscripts of $F$, not to layer indices.
After the middle sum is expanded, they concatenate to all terms
of $S(k,\ell)$. An interval is omitted when its lower endpoint
exceeds its upper endpoint.}
\label{fig:coverage}
\end{figure}

\begin{lemma}\label{lem:lambda-total}
The total weight of $\lambda$ is
$$
\sum_{I\in\operatorname{Ind}(G)}\lambda(I)=(\ell+1)S_*+1.
$$
\end{lemma}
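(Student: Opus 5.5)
We compute the total weight directly from the definitions \eqref{eq:lambdaA} and \eqref{eq:lambdaB}. The $A$-sets contribute
$$
\sum_{a\in[m]}\sum_{j=1}^{k}F_{j+\ell-2}=m\sum_{q=\ell-1}^{k+\ell-2}F_q=mS_*,
$$
since $S_*=S(k,\ell)$ sums exactly the indices $\ell-1,\ldots,k+\ell-2$. It remains to show that the $B$-sets contribute $(\ell+1-m)S_*+1$, equivalently $\ell S_*-((m-1)S_*-1)$. This mirrors the structure of Proposition~\ref{prop:mu-total}, and I would organise the computation to match it term by term.

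For the $B$-part, $\lambda(B_j)=\sum_{t=j+1}^{k}F_{t+\ell-2}$ for $j=k-\ell,\ldots,k$. Writing $S_*$ minus the initial segment, this is $\lambda(B_j)=S_*-\sum_{t=1}^{j}F_{t+\ell-2}=S_*-S(j,\ell)$. The term $j=k$ vanishes, so
$$
\sum_{j=k-\ell}^{k}\lambda(B_j)=\sum_{t=1}^{\ell}\bigl(S_*-S(k-t,\ell)\bigr)=\ell S_*-\sum_{t=1}^{\ell}S(k-t,\ell).
$$
When $k=\ell$ the index $j=0$ gives $S(0,\ell)=0$, consistent with $\lambda(B_0)=S_*$, so no separate case is needed. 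Here $S(k-t,\ell)$ is well defined because $k-t\ge0$.

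The remaining sum $\sum_{t=1}^{\ell}S(k-t,\ell)$ was already evaluated in the proof of Proposition~\ref{prop:mu-total} as $(m-1)S_*-1$. To check it, swap the order of summation: each $F_{i-t}$ with $\ell\le i\le k+\ell-2$ and $1\le t\le\ell$ appears exactly when $i-t\ge\ell-1$ lies in the range of $S(k-t,\ell)$, and the missing terms with $i-t<\ell-1$ are zero. Then apply \eqref{eq:F-rec-full} to get $(m-1)\sum_{i=\ell}^{k+\ell-2}F_i=(m-1)(S_*-F_{\ell-1})=(m-1)S_*-1$.

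Adding the two parts gives $mS_*+\ell S_*-(m-1)S_*+1=(\ell+1)S_*+1$. The main point needing care is the bookkeeping of boundary indices: that the $B$-indices run exactly over $k-\ell,\ldots,k$, including $j=0$ when $k=\ell$, and that the vanishing terms $F_i=0$ for $i\le\ell-2$ justify the reindexing. Both were already handled in Proposition~\ref{prop:mu-total}, so I expect no real obstacle.
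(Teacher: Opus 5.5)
Your proof is correct, but it uses a different decomposition from the paper's.

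The paper splits the support of $\lambda$ by how it meets the clique $Z$. By Lemma~\ref{lem:coverage}, the sets meeting $Z$ have total weight $\sum_{t=1}^{\ell}\lambda[z_t]=\ell S_*$. The remaining sets, namely the $A_{j,a}$ with $j\equiv1\pmod\Delta$ and possibly $B_{e(1)}$, are summed by expanding with \eqref{eq:block}, which gives $1+S_*$.

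You instead split by type. The $A$-sets give $mS_*$, computed label by label. The $B$-sets give $\ell S_*-\sum_{t=1}^{\ell}S(k-t,\ell)$, and you then reuse the double-counting identity from Proposition~\ref{prop:mu-total}.

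Your route needs neither the coverage lemma nor the residue-class bookkeeping with $\rho$ and $e(1)$. It also exposes a term-by-term match with the fractional clique. By \eqref{eq:concat}, $\lambda(B_{k-t})=S_*-S(k-t,\ell)=S(t,k+\ell-t)=\mu(z_t)$, and the $A$-weights total $mS_*=\mu(P(1,k))$. So the lemma is exactly the statement $\sum_I\lambda(I)=\mu(V(G))$, which you could quote directly from Proposition~\ref{prop:mu-total}.

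The paper's route is shorter once Lemma~\ref{lem:coverage} is available, since it reads off the weight of the sets through $Z$ from the coverage values.

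Both arguments rely on the indexed sets being pairwise distinct, so that the total weight is just the sum of the assigned values. The paper establishes this before defining $\lambda$, so nothing is missing.
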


\begin{proof}
Since $Z$ is a clique, Lemma~\ref{lem:coverage} shows that the sets
meeting $Z$ have total weight $\ell S_*$. The remaining positive-weight
sets are $A_{j,a}$ with $j\equiv1\pmod\Delta$ and, if $e(1)<k$, the
set $B_{e(1)}$. Write $e(1)=1+h\Delta$, where $h\ge0$. Including the
zero-weight set $B_k$ when necessary, their total weight is
\begin{equation}\label{eq:noX}
m\sum_{t=0}^{h}F_{1+t\Delta+\ell-2}
+\sum_{u=e(1)+1}^{k}F_{u+\ell-2}.
\end{equation}
Separate the initial term, for which $mF_{\ell-1}=1+F_{\ell-1}$.
For $t\ge1$, equation~\eqref{eq:block} expands the remaining
terms into consecutive blocks from index $\ell$ to $e(1)+\ell-2$;
the final sum continues through $k+\ell-2$. Thus
\eqref{eq:noX} is $1+S_*$. Adding $\ell S_*$ gives
$(\ell+1)S_*+1$.
\end{proof}

\begin{proof}[Proof of Theorem~\ref{thm:exact}]
By Lemmas~\ref{lem:coverage} and~\ref{lem:lambda-total},
$\lambda/S_*$ is a fractional colouring of weight $\ell+1+1/S_*$.
This matches the lower bound~\eqref{eq:exact-lower}, proving the theorem.
\end{proof}

The same colouring also proves that the towers are vertex-critical.

\begin{lemma}\label{lem:tower-critical}
For all integers $m\ge2$ and $k\ge\ell\ge1$, the graph
$T_{k,\ell}(K_m)$ is fractionally vertex-critical.
\end{lemma}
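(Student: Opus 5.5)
Fractional vertex-criticality means that $\chif(G-v)<\chif(G)$ for every vertex $v$ of $G=T_{k,\ell}(K_m)$. The plan is to use complementary slackness on the optimal pair from Section~\ref{sec:certificates}. The colouring $\lambda/S_*$ is optimal, and the clique $\mu/S_*$ is optimal with the same value $\ell+1+1/S_*$.

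First observe that $\mu(v)>0$ for every vertex. For $p_{i,a}$ we have $\mu(p_{i,a})=F_{k-i+\ell-1}$ with $k-i+\ell-1\ge\ell-1$, so this is positive by the positivity of $F_i$ for $i\ge\ell-1$. For $z_j$ we have $\mu(z_j)=S(j,k+\ell-j)$, a sum of $j\ge1$ terms with indices at least $k+\ell-j-1\ge k-1\ge\ell-1$, hence positive.

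Now fix $v$ and let $\mu'$ be the restriction of $\mu$ to $V(G)\setminus\{v\}$. Every independent set of $G-v$ is independent in $G$, so $\mu'/S_*$ is a fractional clique of $G-v$. That alone gives only $\chif(G-v)\ge\chif(G)-\mu(v)/S_*$, which is not enough; we need a strict upper bound. For that, use the colouring $\lambda$ restricted to $G-v$: delete $v$ from every independent set carrying weight. The coverage of every remaining vertex stays $S_*$ by Lemma~\ref{lem:coverage}, and the total weight is still $(\ell+1)S_*+1$. So we must improve on this, and for that it is cleaner to use LP duality on $G-v$ directly.

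The key step is to show that the optimal dual for $G-v$ has value strictly less than $\chif(G)$. I would argue by contradiction. Suppose $\chif(G-v)=\chif(G)$, and let $\nu$ be an optimal fractional clique of $G-v$; extend it by $\nu(v)=0$. Then $\nu$ is an optimal fractional clique of $G$. By complementary slackness with the optimal colouring $\lambda/S_*$, every independent set $I$ with $\lambda(I)>0$ satisfies $\nu(I)=1$. Consider two positive-weight sets that differ only at $v$. For $v=p_{i,a}$ with $i<k$ and $i\ge2$, take $A_{i,a}$ and $A_{i+1,a}$: their weights $F_{i+\ell-2}$ and $F_{i+\ell-1}$ are both positive, but the sets may differ in more than $v$, namely in residue-class layers and cap vertices. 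In that case I would instead pair each set with $I\setminus\{v\}$ or $I\cup\{v\}$ via the local modification, and use the full system of tight equalities. The cleanest route is to show that the equalities $\nu(I)=1$ over the support of $\lambda$ force $\nu$ to be proportional to $\mu$. This uniqueness follows if the incidence vectors of the supporting sets span $\mathbb R^{V(G)}$. Uniqueness then gives $\nu=\mu/S_*$, whence $\nu(v)>0$, a contradiction.

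The main obstacle is therefore proving that the support sets $A_{j,a}$ and $B_j$ have full rank $mk+\ell$. There are $mk$ sets of type $A$ and up to $\ell$ nonempty sets of type $B$ (including $B_0$ when $k=\ell$), so the count matches exactly. I would order the sets and vertices so that the incidence matrix becomes block triangular. Differences $A_{j,a}-A_{j,b}$ isolate single-label tails, and then $A_{j,a}-A_{j+1,a}$ together with the residue pattern isolates individual layers. The $B_j$ and the $A$-sets with different residues then separate the cap vertices $z_1,\dots,z_\ell$. If the rank computation proves messy, the fallback is a direct argument. Remove $v$ and decrease the weights of $\lambda$ on a few sets containing $v$ by a small $\varepsilon$, compensating elsewhere, so as to produce a colouring of $G-v$ of weight strictly less than $(\ell+1)S_*+1$, with layer-by-layer adjustments using \eqref{eq:block}.
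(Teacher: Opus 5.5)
Your overall strategy matches the paper's. You extend an optimal fractional clique $\nu$ of $G-v$ by zero, use complementary slackness against $\lambda/S_*$ to get $\nu(I)=1$ for every $A_{j,a}$ and every $B_j$ with $j<k$, and contradict $\nu(v)=0$. Your positivity check for $\mu$ is correct, and so is your count of $mk+\ell$ support sets. The full-rank claim you need is also true.

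The gap is that this claim carries the whole content of the lemma, and you leave it unproved. The mechanism you sketch for it also fails as stated. Since $\Delta=\ell+1\ge2$, the sets $A_{j,a}$ and $A_{j+1,a}$ lie in different residue classes. So they have different cap parts $Z_j\ne Z_{j+1}$ and disjoint families of full layers, and their difference mixes all of these without isolating a layer.

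The comparisons that work stay inside one residue class. Take $w$ satisfying the tight equalities, or their homogeneous version (which is your rank statement).
\begin{itemize}
\item Comparing $A_{j,a}$ with $A_{j,b}$ shows that the one-label tail sums, and hence $w(p_{i,a})=x_i$, do not depend on $a$.
\item Compare $A_{j,a}$ with $A_{j+\ell+1,a}$ when $j+\ell+1\le k$, and with $B_j$ otherwise. The common earlier layers and $Z_j$ cancel, giving $(m-1)x_j=\sum_{i=j+1}^{\min\{j+\ell,k\}}x_i$ for $1\le j<k$.
\item The set $A_{1,a}$ gives $\sum_{i=1}^k x_i=1$ (respectively $0$). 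This fixes $x_k$, and hence every $x_i$.
\item Finally, $A_{t+1,a}=\{z_t\}\cup\{p_{i,a}:t<i\le k\}$ for $t<k$, together with $B_0=\{z_\ell\}$ when $t=k=\ell$, determines the cap weights.
\end{itemize}

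This is exactly the paper's computation. It also makes your detour through uniqueness and the explicit $\mu$ unnecessary. The recurrence has positive coefficients, so any optimal clique has $x_i>0$. It also has $w(z_t)=\sum_{i\le t}x_i>0$, or $w(z_\ell)=1$ in the case $t=k=\ell$, and this is how the paper concludes. Your $\varepsilon$-perturbation fallback is not an argument as written.
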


\begin{proof}
Put $G=T_{k,\ell}(K_m)$ and $S_*=S_{m,\ell}(k,\ell)$, and let $w$ be
any optimal fractional clique of $G$. Write $w(U)=\sum_{v\in U}w(v)$.
The preceding optimal colouring $\lambda/S_*$ covers every vertex with
weight exactly $1$, so
$$
\sum_{I\in\operatorname{Ind}(G)}\frac{\lambda(I)}{S_*}\bigl(1-w(I)\bigr)=0.
$$
Every summand is nonnegative. The weights in
\eqref{eq:lambdaA}--\eqref{eq:lambdaB} are positive for all $A_{j,a}$
and all $B_j$ with $j<k$. Hence
$$
w(A_{j,a})=1\quad(j\in[k],\ a\in[m]),
\qquad
w(B_j)=1\quad(k-\ell\le j<k).
$$

Comparing the equalities for $A_{j,a}$ over the labels shows that
$\sum_{i=j}^{k}w(p_{i,a})$ is independent of $a$. Starting with $j=k$
and subtracting consecutive tail sums, we obtain
$w(p_{i,a})=x_i$ for some $x_i\ge0$ independent of $a$.
For $1\le j<k$, compare $A_{j,a}$ with $A_{j+\ell+1,a}$ when
$j+\ell+1\le k$, and with $B_j$ otherwise. By
\eqref{eq:A}--\eqref{eq:B}, the common vertices cancel, giving
\begin{equation}\label{eq:critical-recurrence}
(m-1)x_j=\sum_{i=j+1}^{\min\{j+\ell,k\}}x_i
\qquad(1\le j<k).
\end{equation}
Also, $w(A_{1,a})=1$ gives $\sum_{i=1}^{k}x_i=1$.
If $x_k=0$, backward substitution in~\eqref{eq:critical-recurrence}
would force every $x_i$ to be zero. Thus $x_k>0$, and the same
recurrence gives $x_i>0$ for every $i$.

For $1\le t\le\ell$ with $t<k$, the set $A_{t+1,a}$ consists of
$z_t$ and the one-label tail from $P_{t+1}$ to $P_k$. Therefore
$$
w(z_t)=1-\sum_{i=t+1}^{k}x_i=\sum_{i=1}^{t}x_i>0.
$$
The only remaining case is $t=k=\ell$, when the tight set
$B_0=\{z_\ell\}$ gives $w(z_\ell)=1$. Thus every optimal fractional
clique is positive at every vertex. If $\chif(G-v)=\chif(G)$ for some
$v$, an optimal fractional clique of $G-v$, extended by weight zero at
$v$, would be optimal for $G$. This contradiction proves the lemma.
\end{proof}

\subsection{The reduced denominator}\label{sec:denominator}

We now determine the reduced denominator in
Theorem~\ref{thm:exact}.

\begin{theorem}\label{thm:denominator}\label{cor:den}
Let $m\ge2$ and $\ell\ge1$ be integers, and define
$$
Q_k=Q_k^{(m,\ell)}
=(m-1)^kS_{m,\ell}(k,\ell)\qquad(k\ge1).
$$
For $k\ge\ell$, the reduced denominator of
$\chif(T_{k,\ell}(K_m))$ is $Q_k$, and $Q_k$ satisfies 
\begin{equation}\label{eq:Qrec}
    Q_k=mQ_{k-1}-(m-1)^\ell Q_{k-\ell-1}
\quad(k\ge2)
\end{equation}
\end{theorem}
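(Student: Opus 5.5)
Set $r=m-1$ and write $G_i=r^iF_i$. The plan is to turn the recurrence~\eqref{eq:F-rec-full} into an integer recurrence, read off~\eqref{eq:Qrec} as a telescoped version of it, and then obtain the reduced denominator from $\chif=\ell+1+r^k/Q_k$ together with a coprimality statement.

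\textbf{Integrality.} Multiply~\eqref{eq:F-rec-full} by $r^{i-1}$. This gives
$$
G_i=\sum_{j=i-\ell}^{i-1}r^{\,i-1-j}G_j \qquad (i\ge\ell),
$$
with $G_j=0$ for $j\le\ell-2$ and $G_{\ell-1}=r^{\ell-2}$. When $\ell=1$ the initial value $G_0=r^{-1}$ is not an integer, so it is cleaner to work with $H_i=r^{i-\ell+1}F_i$. Then $H_{\ell-1}=1$, the recurrence has integer coefficients, and every $H_i$ is an integer.

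In terms of $H$,
$$
Q_k=r^k\sum_{i=\ell-1}^{k+\ell-2}F_i=\sum_{i=\ell-1}^{k+\ell-2}r^{\,k+\ell-2-i}H_i,
$$
so each $Q_k$ is an integer.

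\textbf{The recurrence~\eqref{eq:Qrec}.} By~\eqref{eq:concat}, $S(k,\ell)-S(k-1,\ell)=F_{k+\ell-2}$. By~\eqref{eq:rS} with $q=k-1$ and~\eqref{eq:concat} again,
$$
rF_{k+\ell-2}=S(\ell,k-1)=S(k-1,\ell)-S(k-\ell-1,\ell),
$$
where a negative first argument is read as $0$; this is consistent with the zero terms below index $\ell-1$. Combining the two identities,
$$
rS(k,\ell)=(r+1)S(k-1,\ell)-S(k-\ell-1,\ell).
$$
Multiplying by $r^{k-1}$ gives $Q_k=mQ_{k-1}-r^{\ell}Q_{k-\ell-1}$, since $r^{k-1}=r^{\ell}\cdot r^{k-\ell-1}$. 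Both~\eqref{eq:rS} and the concatenation need $k-1\ge1$ and care with empty tails, which is the small cases where $k-\ell-1\le0$. The convention $Q_t=0$ for $t\le0$ matches $S(t,\ell)=0$ there, and $Q_1=rF_{\ell-1}=1$.

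\textbf{Reduced denominator.} Theorem~\ref{thm:exact} gives
$$
\chif=\ell+1+\frac{1}{S_*}=\ell+1+\frac{r^k}{Q_k}=\frac{(\ell+1)Q_k+r^k}{Q_k}.
$$
Any prime dividing both numerator and $Q_k$ divides $r^k$, hence divides $r$. It therefore suffices to show $\gcd(Q_k,r)=1$. Reduce~\eqref{eq:Qrec} modulo $r$. Since $m\equiv1$ and $r^\ell\equiv0\pmod r$, we get $Q_k\equiv Q_{k-1}\pmod r$ for $k\ge2$, so $Q_k\equiv Q_1=1\pmod r$. This also covers $r=1$ trivially. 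Hence the fraction is in lowest terms and the reduced denominator is $Q_k$, because $Q_k>0$.

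\textbf{Main obstacle.} The delicate point is the index bookkeeping for small $k$ (roughly $k\le\ell+1$) in the telescoping step, where~\eqref{eq:rS} involves initial zero terms of $F$. I will check this by the same shifting argument used in Lemma~\ref{lem:tail}: when $1\le q<\ell$, $S(\ell,q)=S(q,\ell)$. Everything else is direct.
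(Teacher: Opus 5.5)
Your proof is correct and follows the paper's proof essentially step for step. You use the same identity $rS(k,\ell)=(r+1)S(k-1,\ell)-S(k-\ell-1,\ell)$, scaled by $r^{k-1}$ to get \eqref{eq:Qrec}, and then $Q_k\equiv Q_1=1\pmod{r}$ to show that $\bigl((\ell+1)Q_k+r^k\bigr)/Q_k$ is already reduced.

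There is one slip in your integrality step. With $H_i=r^{i-\ell+1}F_i$ you get $H_{\ell-1}=F_{\ell-1}=1/r$, so the exponent should be $i-\ell+2$; that is the normalization your formula $Q_k=\sum r^{k+\ell-2-i}H_i$ actually uses. The step is redundant anyway, since \eqref{eq:Qrec} with $Q_1=1$ and $Q_t=0$ for $t\le0$ already gives integrality by induction, as in the paper.
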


\begin{proof}
Write $S_k=S_{m,\ell}(k,\ell)$ for $k\ge1$, and set
$S_t=0$ for $t\le0$.

We first prove the recurrence \eqref{eq:Qrec}.
By the definition of $S_k$,
$$
S_k=S_{k-1}+F_{k+\ell-2}\qquad(k\ge2).
$$
On the other hand, \eqref{eq:F-rec-full} gives
$$
(m-1)F_{k+\ell-2}=S_{k-1}-S_{k-\ell-1}.
$$
Combining these identities, we obtain
$$
\begin{aligned}
(m-1)S_k
&=(m-1)S_{k-1}+S_{k-1}-S_{k-\ell-1}\\
&=mS_{k-1}-S_{k-\ell-1}.
\end{aligned}
$$
Multiplying both sides by $(m-1)^{k-1}$ gives
$$
Q_k=mQ_{k-1}-(m-1)^{k-1}S_{k-\ell-1}.
$$
By the definition of $Q_{k-\ell-1}$,
$$
(m-1)^{k-1}S_{k-\ell-1}
=(m-1)^\ell Q_{k-\ell-1},
$$
where both sides are zero when $k-\ell-1\le0$.
Thus
$$
Q_k=mQ_{k-1}-(m-1)^\ell Q_{k-\ell-1},
$$
as required.

The initial value is $Q_1=(m-1)F_{\ell-1}=1$.
Hence the recurrence shows inductively that every $Q_k$
is an integer and
$$
Q_k\equiv1\pmod{m-1}.
$$
In particular, $\gcd(Q_k,m-1)=1$.
Also, $Q_k>0$ since $S_k>0$.

Finally, for $k\ge\ell$, Theorem~\ref{thm:exact} gives
$$
\chif(T_{k,\ell}(K_m))
=\ell+1+\frac{1}{S_k}
=\frac{(\ell+1)Q_k+(m-1)^k}{Q_k}.
$$
Since $\gcd(Q_k,(m-1)^k)=1$, the numerator is coprime
to $Q_k$. Therefore the reduced denominator is $Q_k$.
\end{proof}

\subsection{Growth of the reduced denominators}\label{sec:asymptotic}

We next estimate the difference between $Q_k^{(m,\ell)}$
and $m^{k-1}$. The estimate holds for all $\ell\ge1$,
allowing us to choose $\ell$ as a function of $k$.

\begin{lemma}\label{lem:elementary-growth}
For all integers $m\ge2$, $\ell\ge1$ and $k\ge1$,
\begin{equation}\label{eq:Q-sandwich}
 m^{k-1}\left(1-\frac{k}{m}
       \left(\frac{m-1}{m}\right)^\ell\right)
 \le Q_k^{(m,\ell)}\le m^{k-1}.
\end{equation}
\end{lemma}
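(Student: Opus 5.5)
Write $R_k=m^{k-1}-Q_k$, with $Q_k=0$ for $k\le0$ and $Q_1=1$. We need $0\le R_k\le k\,m^{k-2}(m-1)^\ell/m^{\ell-1}$, since $m^{k-1}\cdot\frac km\bigl(\frac{m-1}m\bigr)^\ell=k\,m^{k-\ell-2}(m-1)^\ell$. The plan is to use the recurrence \eqref{eq:Qrec} from Theorem~\ref{thm:denominator} together with two facts: $Q_j\ge0$ for all $j$, and $Q_j\le m^{j-1}$ for $j\ge1$.

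\textbf{Upper bound.} Nonnegativity is immediate, because $Q_j=(m-1)^jS_j$ with $S_j>0$ for $j\ge1$ and $Q_j=0$ for $j\le0$. The recurrence then gives $Q_k\le mQ_{k-1}$ for $k\ge2$. Since $Q_1=1$, induction yields $Q_k\le m^{k-1}$.

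\textbf{Recurrence for the deficit.} For $k\ge2$, substituting $Q_j=m^{j-1}-R_j$ into \eqref{eq:Qrec} gives
$$
R_k=mR_{k-1}+(m-1)^\ell Q_{k-\ell-1}\le mR_{k-1}+(m-1)^\ell m^{k-\ell-2}.
$$
The inequality uses the upper bound just proved when $k-\ell-1\ge1$, and $Q_{k-\ell-1}=0$ otherwise. For $k\ge2$ we have $R_1=0$ and $R_{k-1}\ge0$. Small indices need a little care: for $k-1\le0$ one should not substitute $m^{k-2}$ for $Q_{k-1}$. That is why the recurrence is started at $k=2$, where $Q_1=1=m^0$ holds exactly.

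\textbf{Unrolling.} Dividing by $m^{k-1}$ gives
$$
\frac{R_k}{m^{k-1}}\le\frac{R_{k-1}}{m^{k-2}}+\frac{(m-1)^\ell}{m^{\ell+1}}.
$$
Summing from $2$ to $k$ then gives
$$
\frac{R_k}{m^{k-1}}\le (k-1)\frac{(m-1)^\ell}{m^{\ell+1}}\le\frac km\Bigl(\frac{m-1}m\Bigr)^\ell,
$$
which is exactly the lower bound in \eqref{eq:Q-sandwich}. The bound is even sharper, since the increments vanish whenever $k\le\ell+1$.

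The main obstacle is only bookkeeping: handling the conventions $Q_t=0$ for $t\le0$ and checking that the crude bound $Q_{k-\ell-1}\le m^{k-\ell-2}$ is applied only for positive indices. The argument itself is a direct telescoping.
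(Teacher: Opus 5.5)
Your proof is correct and is essentially the paper's argument: dividing the deficit recurrence $R_k=mR_{k-1}+(m-1)^\ell Q_{k-\ell-1}$ by $m^{k-1}$ and summing gives the same identity the paper gets by multiplying $Q_i-mQ_{i-1}=-(m-1)^\ell Q_{i-\ell-1}$ by $m^{k-i}$ and telescoping, and both then use the crude bound $Q_{i-\ell-1}\le m^{i-\ell-2}$. One cosmetic slip: your first stated target $k\,m^{k-2}(m-1)^\ell/m^{\ell-1}$ should have $m^{\ell}$ in the denominator, as your next expression $k\,m^{k-\ell-2}(m-1)^\ell$ shows, and the bound you actually derive by unrolling is the correct one.
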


\begin{proof}
Fix $m$ and $\ell$, and write $Q_i=Q_i^{(m,\ell)}$. The recurrence~\eqref{eq:Qrec} and the nonnegativity of the $Q_i$ give $Q_k\le mQ_{k-1}$ for $k\ge2$. Since $Q_1=1$, induction gives the upper bound. For $2\le i\le k$, multiply $Q_i-mQ_{i-1}=-(m-1)^\ell Q_{i-\ell-1}$ by $m^{k-i}$ and sum. The left-hand side telescopes, giving
\begin{equation}\label{eq:growth-telescope}
 Q_k-m^{k-1}
 =-(m-1)^\ell\sum_{i=2}^{k}m^{k-i}Q_{i-\ell-1}.
\end{equation}
The terms with $i\le\ell+1$ vanish. For each remaining term, the upper bound already proved gives
$$
 m^{k-i}Q_{i-\ell-1}
 \le m^{k-i}m^{i-\ell-2}=m^{k-\ell-2}.
$$
There are at most $k$ such terms. Substitution in
\eqref{eq:growth-telescope} proves the lower bound in
\eqref{eq:Q-sandwich}.
\end{proof}

\begin{proof}[Proof of Theorem~\ref{thm:asymptotic}]
Fix an integer $m\ge2$. For each positive integer $\ell$, let
\begin{equation}\label{eq:growth-parameters}
 G_\ell=T_{\ell^2,\ell}(K_m),
 \qquad
 n_\ell=m\ell^2+\ell,
 \qquad
 q_\ell=Q_{\ell^2}^{(m,\ell)}.
\end{equation}
By Lemma~\ref{lem:tower-critical} and
Theorem~\ref{thm:denominator}, the graph $G_\ell$ is fractionally
vertex-critical, has $n_\ell$ vertices, and has reduced denominator
$q_\ell$. Hence $\sd(n_\ell)\ge q_\ell$.

Lemma~\ref{lem:elementary-growth} gives
\begin{equation}\label{eq:growth-subsequence}
 1-\frac{\ell^2}{m}
       \left(\frac{m-1}{m}\right)^\ell
 \le \frac{q_\ell}{m^{\ell^2-1}}
 \le 1.
\end{equation}
Since $m$ is fixed and $(m-1)/m<1$, the error term tends to zero.
Thus $q_\ell=(1+o(1))m^{\ell^2-1}$, and consequently
\begin{equation}\label{eq:tower-root-limit}
 q_\ell^{1/n_\ell}
 =(1+o(1))m^{(\ell^2-1)/(m\ell^2+\ell)}
 \longrightarrow m^{1/m}
 \qquad\text{as }\ell\to\infty.
\end{equation}
As $n_\ell$ is strictly increasing, it follows that
\begin{equation}\label{eq:general-exponential-base}
 \limsup_{n\to\infty}\sd(n)^{1/n}\ge m^{1/m}.
\end{equation}

Finally, 
$$
 \frac{d}{dx}\left(\frac{\log x}{x}\right)
 =\frac{1-\log x}{x^2}<0
 \qquad\text{for }x\ge3.
$$
Thus $m^{1/m}$ decreases for integers $m\ge3$.
Since $3^{1/3}>2^{1/2}$, its maximum over integers $m\ge2$
is attained at $m=3$. Substituting $m=3$ into
\eqref{eq:general-exponential-base} proves the theorem.
\end{proof}

\newpage
\bibliographystyle{plainnat}
\bibliography{references}

@article{AlonTuzaVoigt1997,
  author = {Alon, Noga and Tuza, Zsolt and Voigt, Margit},
  title = {Choosability and fractional chromatic numbers},
  journal = {Discrete Mathematics},
  volume = {165--166},
  pages = {31--38},
  year = {1997},
  doi = {10.1016/S0012-365X(96)00159-8}
}

@article{BonamyHylasovaKaiserSereni2026,
  author = {Bonamy, Marthe and Hylasov{\'a}, Karol{\'i}na and Kaiser, Tom{\'a}{\v s} and Sereni, Jean-S{\'e}bastien},
  title = {Lower bound on the maximum denominator of fractional chromatic numbers},
  journal = {The Electronic Journal of Combinatorics},
  volume = {33},
  number = {2},
  pages = {P2.38},
  year = {2026},
  doi = {10.37236/14524}
}

@article{ChvatalGareyJohnson1978,
  author = {Chv{\'a}tal, V{\'a}clav and Garey, Michael R. and Johnson, David S.},
  title = {Two results concerning multicoloring},
  journal = {Annals of Discrete Mathematics},
  volume = {2},
  pages = {151--154},
  year = {1978},
  doi = {10.1016/S0167-5060(08)70329-7}
}

@article{Fisher1995,
  author = {Fisher, David C.},
  title = {Fractional colorings with large denominators},
  journal = {Journal of Graph Theory},
  volume = {20},
  number = {4},
  pages = {403--409},
  year = {1995},
  doi = {10.1002/jgt.3190200403}
}

@article{Hall1935,
  author = {Hall, Philip},
  title = {On representatives of subsets},
  journal = {Journal of the London Mathematical Society},
  volume = {s1-10},
  number = {1},
  pages = {26--30},
  year = {1935},
  doi = {10.1112/jlms/s1-10.37.26}
}

@book{HornJohnson2013,
  author = {Horn, Roger A. and Johnson, Charles R.},
  title = {Matrix Analysis},
  publisher = {Cambridge University Press},
  edition = {2nd},
  year = {2013},
  doi = {10.1017/CBO9781139020411}
}

@article{LarsenProppUllman1995,
  author = {Larsen, Michael and Propp, James and Ullman, Daniel},
  title = {The fractional chromatic number of {Mycielski}'s graphs},
  journal = {Journal of Graph Theory},
  volume = {19},
  number = {3},
  pages = {411--416},
  year = {1995},
  doi = {10.1002/jgt.3190190313}
}

@book{ScheinermanUllman2011,
  author = {Scheinerman, Edward R. and Ullman, Daniel H.},
  title = {Fractional Graph Theory: A Rational Approach to the Theory of Graphs},
  publisher = {Dover Publications},
  address = {Mineola, NY},
  year = {2011},
  isbn = {9780486485935},
  note = {Reprint of the 1997 original}
}

@book{Schrijver1986,
  author = {Schrijver, Alexander},
  title = {Theory of Linear and Integer Programming},
  publisher = {John Wiley \& Sons},
  year = {1986}
}

@article{Sperner1928,
  author = {Sperner, Emanuel},
  title = {Ein {Satz} {\"u}ber {Untermengen} einer endlichen {Menge}},
  journal = {Mathematische Zeitschrift},
  volume = {27},
  pages = {544--548},
  year = {1928},
  doi = {10.1007/BF01171114}
}

@article{Tardif2001,
  author = {Tardif, Claude},
  title = {Fractional chromatic numbers of cones over graphs},
  journal = {Journal of Graph Theory},
  volume = {38},
  number = {2},
  pages = {87--94},
  year = {2001},
  doi = {10.1002/jgt.1025}
}
\end{document}